\theoremstyle{plain}
\newtheorem{teo}{Theorem}[section]
\newtheorem{pro}[teo]{Proposition}
\newtheorem{defn}[teo]{Definition}
\theoremstyle{definition}
\newcommand{\B}{\mathbb{B}}
\newcommand{\HH}{\mathbb{H}}
\newcommand{\N}{\mathbb{N}}
\newcommand{\C}{\mathbb{C}}
\newcommand{\rr}{\mathbb{R}}
\newcommand{\s}{\mathbb{S}}
\newcommand{\Z}{\mathbb{Z}}
\newcommand{\p}{\partial}
\DeclareMathOperator{\ess}{ess}
\DeclareMathOperator{\ext}{ext}
\newcommand{\RR}{\mathbb{R}}
\newcommand{\BB}{\mathbb{B}}
\renewcommand{\SS}{\mathbb{S}}
\title{\bf Quaternionic Hankel operators and approximation by slice regular functions}
\author{ Giulia Sarfatti \thanks{Partially supported by INDAM-GNSAGA, by the PRIN project Real and Complex Manifolds, by the FIRB project Differential Geometry and Geometric Function Theory, and by the SIR
2014 project Analytic Aspects of Complex and Hypercomplex Geometry (code
no. RBSI14DYEB) of the Italian MIUR. }\\
\normalsize Dipartimento di Matematica e Informatica ``U. Dini'',
Universit\`a di Firenze\\
\normalsize viale Morgagni 67/a
50134 Firenze, Italy,  \ email:  sarfatti@math.unifi.it }
\date{}
\begin{document}

\maketitle

\begin{abstract}
In this paper we study Hankel operators in the quaternionic setting. In particular we prove that they  can be exploited to measure the $L^{\infty}$ distance of a slice $L^{\infty}$ function (i.e. an essentially bounded function on the quaternionic sphere $\p\B$ which is affine with respect to quaternionic imaginary units) from the space of bounded slice regular functions (i.e. bounded quaternionic power series on the quaternionic unit ball $\B$). Among the difficulties arising from the non-commutative context there is the lack of a good factorization result for slice regular functions in the Hardy space $H^1$. 
\vskip .5cm
{\noindent \scriptsize \sc Key words and
phrases:} {\scriptsize{\textsf { functions of a quaternionic variable; Hankel operators.}}}\\{\scriptsize\sc{\noindent Mathematics
Subject Classification:}}\,{\scriptsize \,30G35, 47B35 } 
\end{abstract}

\section{Introduction}
A linear operator on a Hilbert space is called a Hankel operator if its associated matrix 
has constant antidiagonals. 
Given any sequence $\alpha=\{\alpha_n\}_{n\in \N}$ with values in the skew field of quaternions,  we can form the infinite matrix $M_\alpha=(\alpha_{j+k})_{j,k=0}^{+\infty}$ with constant antidiagonals, and let $\Gamma_\alpha$ acting on quaternion valued sequences $v=(v_j)_{j=0}^{+\infty}$ by matrix multiplication: $(\Gamma_\alpha v)(j)=\sum_{k=0}^{\infty}\alpha_{j+k}v_k$. 
The classical Nehari Theorem (see \cite{nehari}) characterizes the (complex valued) sequences $\alpha$ such that the Hankel operator $\Gamma_\alpha$ is bounded on the Hilbert space $\ell^2(\N,\C)$. 
Together with Nicola Arcozzi, we investigated this problem in the quaternionic setting in \cite{carleson}. We showed that, in analogy with the complex valued case, where the problem can be translated in terms of holomorphic functions,  in the quaternionic setting it can be reformulated in terms of slice regular functions.
This class of function, introduced by Gentili and Struppa in \cite{GSAdvances}, represents in fact a valid counterpart in the quaternionic setting to holomorphic functions. We refer to the monograph \cite{libroGSS} for all results and proofs concerning the theory of slice regular functions. For a survey of the theory of Hankel operators in the complex setting see \cite{peller}.


The purpose of the present work is to give an interpretation of quaternionic Hankel operators as tools to measure the distance of a bounded quaternionic {\em slice} function from the space of bounded {\em slice regular} functions.

Let $\HH$ denote the skew field of quaternions, let $\BB=\{q\in\HH:\ |q|<1\}$ be the quaternionic unit ball and
let $\partial \BB$ be its boundary, containing elements of the form $q=e^{tI},\ I\in\SS,\ t\in\RR$, where $\SS=\{q\in \HH : q^2=-1\}$ is the two dimensional sphere of imaginary units in $\HH$.
We endow $\partial\BB$ with the measure $d\Sigma\left(e^{tI}\right)=d\sigma(I)dt$, which is naturally associated with the Hardy space $H^2(\BB)$ of slice regular functions on $\BB$, see \cite{metrica}. 
The measures are normalized so that $\Sigma(\partial\BB)=\sigma(\SS)=1$. 

The class of functions we are considering satisfies the following algebraic condition:
a function $f$ defined on $\p\B$ is called a {\em slice function} if for any two sphere of the type $e^{t\s }:=\{e^{tI} : I\in\s\}$ contained in $\p\B$, 
\[f(e^{tJ})=a(t)+Jb(t),\]
where $a,b$ are quaternion valued functions depending only on $t$. 
Namely $f$ is slice if its restriction to each sphere $e^{t\s}$ is affine in the imaginary unit variable. 
Slice functions were introduced and studied by Ghiloni and Perotti in the more general setting of real alternative algebras in \cite{ghiloniperotti}.   
The restriction to $\p\B$ of slice regular functions furnishes an important class of examples of slice function.
A concrete example is given by convergent (for almost every $t$) power series of the form $e^{It}\mapsto \sum_{n\in \mathbb{Z}}e^{Int}a_n$, with quaternionic coefficients $a_n$. For instance, such a series does converge if the sequence of coefficients $\{a_n\}$ belongs to $\ell^2(\Z, \HH)$.  
Power series with coefficients in $\ell^{1}(\Z,\HH)$ are considered in \cite{wiener}. 

Let $L_s(\p\B)$ denote the space of measurable slice functions and, for $1\le p<+\infty$, let
$L^p(\partial\BB)$ denote the space of (equivalence classes of) functions $f:\partial\BB\to\HH$ such that $\|f\|_p^p:=\int_{\partial\BB}|f|^pd\Sigma<\infty$.  
For $p=+\infty$, $L^{\infty}(\p\B)$ denotes the space of (equivalence classes of) measurable functions essentially bounded with respect to the measure $d\Sigma$.
The space of slice $L^p$ functions will be denoted by $L^p_s(\p\B)=L^p(\p\B)\cap L_s(\p\B)$. It is possible to show that $L^p_s(\p\B)$ has a natural structure of right linear space over $\HH$ (to be more precise of a right $\HH$ modulus), and of a Banach space.

In the case $p=2$, we have
\begin{pro}
\[L^2_s(\p\B)=\Big\{f\in L^2(\p\B) : \ \text{for a.e. $q\in \p\B$,}\ f(q)=\sum_{n\in \Z}q^na_n  \ \text{with }\ \{a_n\}\in \ell^2(\Z, \HH) \Big\}\]
and the inner product
\[\Big\langle \sum_{n\in \Z}q^na_n, \sum_{n\in \Z}q^nb_n\Big\rangle_{L^2_s(\p\B)}:=\sum_{n\in \Z}\overline{b}_na_n\]
endows $L^2_s(\p\B)$ with the structure of a quaternionic Hilbert space.
\end{pro}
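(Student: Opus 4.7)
The plan is to reduce the problem to Fourier analysis on the circle with $\HH$-valued coefficients. Parametrize $\p\B$ by $q=e^{tJ}$ with $J\in\SS$ and $t$ ranging over an interval of length $2\pi$; each $q\neq\pm 1$ has exactly two preimages, related by $(t,J)\leftrightarrow(-t,-J)$ modulo $2\pi$. Consistency of the slice representation $f(e^{tJ})=a(t)+Jb(t)$ under this identification forces $a$ to be even and $b$ to be odd. Using the elementary symmetry identities $\int_\SS J\,d\sigma(J)=0$ and $\int_\SS J^2\,d\sigma(J)=-1$, a direct expansion of $(a+Jb)\overline{(a+Jb)}$ gives
\[\int_\SS |a(t)+Jb(t)|^2\,d\sigma(J)=|a(t)|^2+|b(t)|^2,\]
and hence, by Fubini, $\|f\|_{L^2(\p\B)}^2$ coincides (up to a fixed normalization) with $\|a\|_{L^2}^2+\|b\|_{L^2}^2$. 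In particular $f\in L^2_s(\p\B)$ iff both $a$ and $b$ belong to $L^2$ of the circle.

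The second step is to Fourier-expand $a$ (even) and $b$ (odd) with quaternion coefficients,
\[a(t)=A_0+\sum_{n\ge 1}A_n\cos(nt),\qquad b(t)=\sum_{n\ge 1}B_n\sin(nt),\qquad A_n,B_n\in\HH,\]
and to set $a_0:=A_0$ and $a_{\pm n}:=(A_n\pm B_n)/2$ for $n\geq 1$. From $e^{ntJ}=\cos(nt)+J\sin(nt)$ one computes termwise
\[\sum_{n\in\Z}e^{ntJ}a_n=A_0+\sum_{n\ge 1}\bigl[\cos(nt)(a_n+a_{-n})+J\sin(nt)(a_n-a_{-n})\bigr]=a(t)+Jb(t),\]
so $f(q)=\sum_{n\in\Z}q^n a_n$ almost everywhere. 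Parseval applied to the real cosine/sine series then yields $\|f\|_{L^2}^2=c\sum_{n\in\Z}|a_n|^2$, so $f\in L^2_s(\p\B)$ if and only if $\{a_n\}\in\ell^2(\Z,\HH)$, and the assignment $f\mapsto\{a_n\}$ is a right-$\HH$-linear isometric bijection.

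The inner product formula is then extracted from the same integration identities on $\SS$: they make $\{q^n\}_{n\in\Z}$ an orthonormal system for the natural pairing $\langle f,g\rangle:=\int_{\p\B}\bar g\,f\,d\Sigma$, and combining this with right-$\HH$-linearity in the first argument and conjugate-linearity in the second produces $\langle\sum q^n a_n,\sum q^n b_n\rangle=\sum_n\bar b_n a_n$. Completeness of $L^2_s(\p\B)$ transfers from that of $\ell^2(\Z,\HH)$ via the isometry. The one point that requires real care is exploiting the parity constraints on $(a,b)$: they are exactly what lets the cosine series for $a$ and the sine series for $b$ be refolded into a single two-sided power series in $q$, and tracking the side on which scalars multiply is essential to preserving the right $\HH$-module structure throughout.
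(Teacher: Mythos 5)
Your proof is correct, but it takes a genuinely different route from the paper's. The paper fixes a single slice $L_I$, applies the Splitting Lemma to write $f_I=F+GJ$ with $F,G$ taking values in $L_I\cong\C$ and $J\perp I$, invokes classical complex Fourier/Parseval theory for each of $F$ and $G$, recombines the two complex series into $\sum_n z^n(\widehat F(n)+\widehat G(n)J)$, and only then uses the representation formula \eqref{repfor} to propagate the expansion from $\p\B_I$ to all of $\p\B$. You instead work with the intrinsic slice decomposition $f(e^{tJ})=a(t)+Jb(t)$ with quaternion-valued components, extract the parity constraints ($a$ even, $b$ odd) from the identification $(t,J)\sim(-t,-J)$, compute the $L^2(\p\B)$ norm globally via the symmetry identities $\int_{\SS}J\,d\sigma=0$ and $J^2=-1$, and expand $a$ and $b$ in real cosine/sine series with quaternionic coefficients before folding them into a single two-sided power series. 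The paper's splitting buys an immediate reduction to the standard complex $\ell^2$ theory with no parity bookkeeping and no explicit integration over $\SS$; your approach avoids choosing a second orthogonal imaginary unit, works directly with the data $(a,b)$ defining a slice function, and yields the identity $\int_{\SS}|a+Jb|^2\,d\sigma=|a|^2+|b|^2$, which makes the orthonormality of $\{q^n\}_{n\in\Z}$ and the formula $\langle f,g\rangle=\sum_n\overline{b}_na_n$ transparent. Both arguments conclude by transferring completeness from $\ell^2(\Z,\HH)$ through the resulting isometry, so the two proofs are of comparable strength; yours is slightly more self-contained, the paper's slightly shorter.
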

For the extention of classical functional analysis results to the quaternionic setting we refer to the book \cite{Brackx} and, for the specific case of slice functions, to \cite{ghilonimorettiperotti}.

The first result concerning the study of Hankel operators is a reformulation of the Nehari type Theorem proved in \cite{carleson} in which we give a characterization of bounded Hankel operators is given in terms of slice $L^{\infty}$ functions. 
The norm of a bounded linear operator $T$ mapping a quaternionic normed linear space $\mathcal U$ to another quaternionic normed linear space $\mathcal V$ is defined as usual as 
\[\|T\|_{\mathcal{B}(\mathcal U)}:=\sup_{u\in \mathcal U, u\not \equiv 0}\frac{\|Tu\|_{\mathcal V}}{\|u\|_{\mathcal U}}.\] 
\begin{teo}\label{nehariintro}
Let $\alpha=\{\alpha_n\}_{n\in \N}\subset \HH$ be a quaternion valued sequence. The operator $\Gamma_{\alpha }$ mapping any sequence $a=\{a_n\}\in \ell^{2}(\N, \HH)$ to the sequence defined by
\[ (\Gamma_\alpha a)(n):=\sum_{j\ge 0}\alpha_{n+j}a_j\]
 is bounded on $\ell^2(\N, \HH)$ if and only if there exists $\psi(q)=\sum_{n\in \Z}q^n\widehat{\psi}(n) \in L_{s}^{\infty}(\p\B)$ such that $\widehat{\psi}(m)=\overline{\alpha}_m$ for any $m\ge 0$.
In this case 
\begin{equation} \label{normaGammaintro}
\begin{aligned}
\inf\{\|\psi\|_{L^{\infty}(\p\B)} \,& : \, \psi \in L^{\infty}_s(\p\B), \widehat{\psi}(m)= \overline{\alpha}_m,  m\ge 0\}\le \|\Gamma_{\alpha}\|_{\mathcal{B}(\ell^2(\N, \HH))}\\
&\le 2 \inf\{\|\psi\|_{L^{\infty}(\p\B)} \, : \, \psi \in L^{\infty}_s(\p\B),  \widehat{\psi}(m)= \overline{\alpha}_m,  m\ge 0\}.
\end{aligned}
\end{equation}
\end{teo}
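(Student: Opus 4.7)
The plan is to adapt the classical Nehari theorem to the quaternionic setting, splitting the argument along the two inequalities in \eqref{normaGammaintro}.

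For the upper estimate $\|\Gamma_\alpha\|_{\BBB(\ell^2(\N,\HH))} \leq 2\,\inf \|\psi\|_{L^{\infty}(\p\B)}$, I would fix a symbol $\psi \in L^{\infty}_s(\p\B)$ with $\widehat\psi(m) = \overline\alpha_m$ for $m \geq 0$, and associate to each $a, b \in \ell^2(\N, \HH)$ the slice regular Hardy functions $f_a(q) = \sum_{n\geq 0} q^n a_n$ and $f_b(q) = \sum_{n \geq 0} q^n b_n$. Matching Fourier coefficients and using the explicit description of the $L^2_s(\p\B)$ inner product from the previous proposition, I would rewrite
\[
\langle \Gamma_\alpha a, b\rangle_{\ell^2(\N,\HH)} = \sum_{n,j \geq 0} \overline{b_n}\,\alpha_{n+j}\,a_j
\]
as an integral of $\psi$ against a suitable $\star$-product of $f_a$ and a conjugate of $f_b$ over $\p\B$. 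A H\"older-type bound, combined with the norm inequality $\|f \prodstella g\|_{L^1(\p\B)} \leq 2\,\|f\|_{H^2(\B)}\|g\|_{H^2(\B)}$, then yields $|\langle \Gamma_\alpha a, b\rangle| \leq 2\|\psi\|_{L^{\infty}(\p\B)} \|a\|_{\ell^2} \|b\|_{\ell^2}$ and hence the operator bound.

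For the lower estimate $\inf \|\psi\|_{L^{\infty}(\p\B)} \leq \|\Gamma_\alpha\|_{\BBB(\ell^2(\N,\HH))}$, I would produce $\psi$ by duality. On the subspace $H^2(\B) \prodstella H^2(\B) \subset L^1_s(\p\B)$ of $\star$-products of Hardy functions, define a right $\HH$-linear functional $\Lambda$ by
\[
\Lambda(f \prodstella g) := \langle \Gamma_\alpha a_f, b_g\rangle_{\ell^2(\N,\HH)},
\]
with $a_f, b_g$ the Taylor coefficient sequences of $f, g$ (possibly with a conjugation/reordering dictated by the non-commutative inner product). The delicate step is to check that $\Lambda$ is well-defined independently of the chosen factorization of its argument, and that its norm is bounded by $\|\Gamma_\alpha\|$ with respect to the $\prodstella$-product norm. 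A quaternionic Hahn--Banach extension (as developed in \cite{Brackx, ghilonimorettiperotti}) then extends $\Lambda$ to a bounded right $\HH$-linear functional on $L^1_s(\p\B)$, and the duality $L^1_s(\p\B)^{*} = L^{\infty}_s(\p\B)$ produces the sought $\psi$. A direct Fourier coefficient computation confirms that $\widehat\psi(m) = \overline\alpha_m$ for $m \geq 0$.

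The main obstacle is the asymmetry between the constants $1$ and $2$ in \eqref{normaGammaintro}, which is forced by the structural limitation advertised in the abstract: slice regular functions in $H^1(\B)$ do not admit an isometric factorization as $\star$-products of two $H^2(\B)$ functions. In the complex setting, Riesz factorization writes any $h \in H^1$ as $fg$ with $\|h\|_1 = \|f\|_2\|g\|_2$, yielding Nehari's exact equality. In our non-commutative setting the best available substitute is a decomposition via $\prodstella$-products carrying a multiplicative loss of at most $2$, and making this norm comparison precise---consistently with the inner product conventions on $L^2_s(\p\B)$ and $\ell^2(\N, \HH)$---is what both shapes the functional $\Lambda$ in the duality step and forces the factor $2$ in the upper estimate.
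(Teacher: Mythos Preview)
Your proposal is correct and follows essentially the same route as the paper: the upper bound via the $\star$-product inequality $\|f\star g\|_{L^1}\le 2\|f\|_{H^2}\|g\|_{H^2}$, and the lower bound via Hahn--Banach together with the duality $(L^1_s(\partial\B))^*\cong L^\infty_s(\partial\B)$ of Theorem~\ref{duals}. The paper streamlines the two points you flag as delicate: it replaces the sesquilinear pairing $\langle \Gamma_\alpha a,b\rangle_{\ell^2}$ by the bilinear form $G_\alpha(a,b)=\sum_{n,k}\alpha_{n+k}a_kb_n=\langle f\star g,\varphi\rangle_{L^2_s}$ (with $\varphi(q)=\sum_n q^n\overline{\alpha_n}$), which sidesteps the conjugate/ordering issue entirely, and it defines the dual functional directly as $h\mapsto\langle h,\varphi\rangle$ on $H^1(\partial\B)=H^2\star H^2+H^2\star H^2$ equipped with the projective norm from \cite{carleson}, so well-definedness is automatic and the bound $\|\Lambda_\alpha\|\le\|\Gamma_\alpha\|$ follows by infimizing over decompositions.
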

\noindent The proof of the ``if and only if'' part of the statement exploits two results. The first one is Theorem 1.1 in \cite{carleson}, which gives several equivalent conditions on the sequence $\alpha$ and its generating function $q \mapsto \sum_{n\in \N}q^n\overline{\alpha}_n$ in order to have a bounded operator $\Gamma_\alpha$. 
In particular we will use the following:
\begin{teo}\label{passouno}
Let $\alpha=\{\alpha_n\}_{n\in \N}\subset \HH$ be a quaternion valued sequence. Then the operator $\Gamma_{\alpha }$ is bounded on $\ell^2(\N, \HH)$ if and only if the function $q \mapsto \sum_{n\in \N}q^n\overline{\alpha}_n$ belongs to the space $BMOA(\p\B)$.
\end{teo}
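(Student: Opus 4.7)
My plan is to recast the boundedness of $\Gamma_\alpha$ as a duality statement and then invoke a quaternionic $H^1$--$BMOA$ duality. Set $\phi(q)=\sum_{n\ge 0}q^n\overline{\alpha}_n$ and identify $\ell^2(\N,\HH)$ with $H^2(\BB)$ via the Taylor coefficient isomorphism $\{a_n\}\mapsto f_a(q)=\sum_{n\ge 0}q^na_n$, which is an isometry of right quaternionic Hilbert spaces.

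First, I would translate the bilinear form associated to $\Gamma_\alpha$ into a boundary pairing. The direct computation
\[
\langle \Gamma_\alpha a, b\rangle_{\ell^2(\N,\HH)}=\sum_{n,k\ge 0}\overline{b}_n\alpha_{n+k}a_k
\]
can be reorganized, using that the Taylor coefficients of a slice $\star$-product convolve, into the boundary pairing $\langle f_b\star f_a,\phi\rangle_{L^2_s(\p\BB)}$ (up to the appropriate conjugation). Consequently, $\Gamma_\alpha$ is bounded on $\ell^2(\N,\HH)$ if and only if $\phi$ defines a continuous linear functional on the subspace $H^2(\BB)\star H^2(\BB)$ of slice regular $H^1$ functions equipped with the projective norm
\[
\|h\|_{H^2\star H^2}:=\inf\Bigl\{\sum_i\|f_i\|_{H^2}\|g_i\|_{H^2}:h=\sum_i f_i\star g_i\Bigr\}.
\]

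Second, I would identify the dual of this projective space with $BMOA(\p\BB)$. In the complex setting, strong factorization $H^1=H^2\cdot H^2$ makes the projective norm equivalent to $\|\cdot\|_{H^1}$, and Fefferman's theorem $(H^1)^*=BMOA$ closes the argument in one stroke. In the quaternionic case, to show the ``$\Leftarrow$'' direction, I would prove that every $\phi\in BMOA(\p\BB)$ bounds the pairing $\langle f\star g,\phi\rangle$ uniformly in $\|f\|_{H^2}\|g\|_{H^2}$, using John--Nirenberg-type estimates on $\p\BB$ together with the Carleson measure characterization of $BMOA$ in the boundary measure $d\Sigma$ of \cite{metrica}. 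For the ``$\Rightarrow$'' direction, one extracts a $BMOA$ representative of the bounded functional on $H^2\star H^2$ via an atomic/extremal decomposition, i.e.\ a weak factorization of $H^1(\BB)$ into $\star$-products.

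The hard part will be precisely this weak-factorization step. As the abstract already flags, strong $H^1=H^2\star H^2$ factorization is unavailable in the slice regular setting because of the non-commutativity of $\HH$ and the poor behaviour of inner-outer factorizations of slice regular functions; any approach must therefore bypass it, either through an atomic decomposition of slice regular $H^1$ or through a direct comparison of the projective norm with the $H^1$ norm at the level of sums of $\star$-products. Once the duality $(H^2(\BB)\star H^2(\BB))^*\simeq BMOA(\p\BB)$ is secured, the theorem follows immediately from the first step.
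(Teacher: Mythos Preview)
The paper does not prove Theorem~\ref{passouno}; it is imported from \cite{carleson} (Theorem~1.1 there) and used as a black box in the proof of Theorem~\ref{nehari1}. There is therefore no in-paper proof against which to compare your proposal.

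That said, your strategy lines up with the circle of ideas from \cite{carleson} as they surface in the present paper. The bilinear-form computation you describe is exactly equation~\eqref{minore} here, namely $G_\alpha(a,b)=\langle f\star g,\varphi\rangle_{L^2_s(\p\BB)}$, and the duality step you want is precisely Theorem~5.6 of \cite{carleson}, $(H^1(\p\BB))^*=BMOA(\p\BB)$, which the paper also invokes. One point worth sharpening: the weak factorization that is actually proved and used is
\[
H^1(\p\BB)=H^2(\p\BB)\star H^2(\p\BB)+H^2(\p\BB)\star H^2(\p\BB),
\]
i.e.\ every $h\in H^1$ is a sum of \emph{two} $\star$-products, and the projective norm is the infimum over such two-term decompositions (see the norm displayed in the proof of Theorem~\ref{nehari1}). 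Your formulation with an unspecified sum $\sum_i f_i\star g_i$ is not wrong, but leaving the factorization step as ``an atomic/extremal decomposition'' understates what is needed: this is where the genuine work lies, and it is done in \cite{carleson}, not here.
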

\noindent The symbol $BMOA(\p\B)$ denotes the space of slice regular functions on $\B$ with bounded mean oscillation. See Section \ref{LP} for the precise definition.
The second result exploited is a representation result for quaternionic $BMO$ functions proved in Section \ref{LP}. 
The main point here relies in the estimates \eqref{normaGammaintro} of the norm of $\Gamma_\alpha$. 
In the proof of the second inequality in \eqref{normaGammaintro} the lack of a good factorization result for slice regular functions in the Hardy space $H^1(\B)$ requires some attention. Moreover a duality result that may have an independent interest is needed:
\begin{teo}\label{dualsintro}
The dual space $(L^1_{s}(\p\B))^*$ is isometrically isomorphic to $L^{\infty}_{s}(\p\B)$. 
\end{teo}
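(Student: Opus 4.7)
The plan is to realize the duality through the natural integral pairing and, for the reverse direction, to combine a Hahn--Banach extension with a slice-averaging projection. For every $\psi\in L^\infty_s(\p\B)$ define $\Lambda_\psi\in (L^1_s(\p\B))^*$ by
\[
\Lambda_\psi(f):=\int_{\p\B}\overline{\psi}\,f\,d\Sigma,\qquad f\in L^1_s(\p\B).
\]
Right $\HH$-linearity in $f$ is immediate, and H\"older's inequality yields $\|\Lambda_\psi\|\le\|\psi\|_\infty$, so $\psi\mapsto\Lambda_\psi$ is a contractive embedding. Injectivity follows by testing against the monomials $q^n$ for $n\in\Z$, which lie in $L^1_s$ by the preceding proposition: these pairings recover (up to conjugation) the Fourier coefficients $\widehat{\psi}(n)$, so $\Lambda_\psi\equiv 0$ forces $\psi= 0$.

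For surjectivity I would start from $\Lambda\in(L^1_s(\p\B))^*$ and extend it by the quaternionic Hahn--Banach theorem to a functional $\widetilde\Lambda$ on all of $L^1(\p\B)$ with $\|\widetilde\Lambda\|=\|\Lambda\|$. The quaternionic $L^1$--$L^\infty$ duality (see \cite{Brackx, ghilonimorettiperotti}) then produces some $\phi\in L^\infty(\p\B)$ with $\|\phi\|_\infty=\|\widetilde\Lambda\|$ representing $\widetilde\Lambda$. Since $\phi$ is in general not slice, project it slice-wise: at almost every level $t$, the two-plane $\{K\mapsto a+Kb : a,b\in\HH\}$ is a closed right $\HH$-submodule of $L^2(\SS,d\sigma;\HH)$ with orthonormal basis $\{1,K\}$, and the associated orthogonal projection yields
\[
\psi(e^{tJ}):=\int_\SS\phi(e^{tK})\,d\sigma(K)\;-\;J\int_\SS K\,\phi(e^{tK})\,d\sigma(K)\in L^\infty_s(\p\B).
\]
The key identity is the cancellation $\int_\SS K\,d\sigma(K)=0$: expanding the integrand of $\int\overline{\phi}\,f\,d\Sigma$ against a slice $f(e^{tJ})=a(t)+Jb(t)$, the integration in $J$ annihilates the non-slice components of $\phi$ and leaves $\int\overline{\psi}\,f\,d\Sigma$. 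Hence $\Lambda_\psi=\widetilde\Lambda|_{L^1_s}=\Lambda$, and the map $\psi\mapsto\Lambda_\psi$ is surjective.

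The hard part will be upgrading this isomorphism to an isometry, i.e.\ establishing $\|\psi\|_\infty\le\|\Lambda\|$ (the converse inequality is already in hand). We have $\|\Lambda\|=\|\phi\|_\infty$ by Hahn--Banach, but the slice projection defined above is only $L^\infty$-bounded, not contractive: a naive pointwise estimate costs a factor of up to two, reflecting the non-commutativity of $\HH$. To close this gap one must exploit the freedom inside the coset $\phi+(L^1_s)^\perp$ to choose a representative whose non-slice part is driven to zero on the essential-supremum set of $|\phi|$: a weak-$*$ compactness argument in $L^\infty$ selects a Hahn--Banach extension of minimal norm inside the coset, and one then verifies that such a minimizer must itself be slice, so that $\psi=\phi$ and the norms coincide. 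Implementing this ``slicing of the extremal'' without losing any norm is the genuinely quaternionic difficulty of the theorem, with no direct analogue in the commutative setting, where the corresponding projection is trivial.
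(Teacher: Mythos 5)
Your first direction (the contractive embedding $\psi\mapsto\Lambda_\psi$ and injectivity via the monomials $q^n$) is fine and matches the paper. The problem is the reverse direction: as you yourself concede, your route produces a slice representative $\psi$ only with $\|\psi\|_{L^\infty(\p\B)}\le 2\|\Lambda\|$, because the fibrewise orthogonal projection onto $\spa_{\HH}\{1,J\}$ is not an $L^\infty$-contraction (this is precisely why \cite{proj} is a nontrivial paper), and the closing paragraph does not repair this. Two concrete objections. First, the claim that a minimal-norm element of the coset $\phi+(L^1_s(\p\B))^\perp$ ``must itself be slice'' is exactly the assertion to be proved, and no mechanism is offered: the standard way to show a minimizer lies in a subspace is to average over a group of $L^\infty$-isometries fixing that subspace and preserving the coset, but the slice projection is \emph{not} such an average -- the representation formula \eqref{repfor} involves the non-unimodular coefficients $\tfrac{1\pm JI}{2}$, which is the source of the factor $2$ you are trying to remove. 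Second, even if some minimizer were produced, what you actually need is that the \emph{unique} slice element of the coset (unique because pairing against $q^n$ pins down all Fourier coefficients) attains the minimal norm; ``some minimizer is slice'' and ``the slice representative is a minimizer'' coincide only after you know uniqueness of the minimizer, which you have not addressed. So the isometry -- the actual content of the theorem -- is not established.

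The paper avoids this entirely by never leaving the slice world. Instead of extending $\Lambda$ outward to all of $L^1(\p\B)$, it restricts $\Lambda$ inward to the dense subspace $L^2_s(\p\B)\subset L^1_s(\p\B)$ (Proposition \ref{density}), where the quaternionic Riesz representation theorem hands you a representative $\varphi\in L^2_s(\p\B)$ that is \emph{automatically slice}; density plus Hahn--Banach then identifies $\Lambda=T_\varphi$ on all of $L^1_s(\p\B)$. Essential boundedness of $\varphi$ is obtained by a direct truncation/testing argument (pairing against $\chi_{E_N}\varphi|\varphi|^{-1}$ on sets where $|\varphi|>N$), with no non-slice duality theorem and no lossy projection. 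If you want to salvage your architecture, you would at minimum have to prove $\|\psi\|_{L^\infty(\p\B)}\le\sup\{|\Lambda_\psi(f)|:\ f\in L^1_s(\p\B),\ \|f\|_{L^1(\p\B)}\le1\}$ by exhibiting near-extremal \emph{slice} test functions, which is a separate argument you have not supplied.
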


Hankel operators admit also a realization as operators from the quaternionic Hardy space (viewed as a subspace of $L^2_s(\p\B)$)  
\[H^2(\p\B)=\Big\{ f \in L^2(\p\B) :\ \text{for a.e. $q\in\p\B,$}\  f(q)=\sum_{n\ge 0}q^na_n \ \text{with}\ \{a_n\}\in \ell^2(\N, \HH) \Big\}\] to its orthogonal complement $H^2_-(\p\B)$ in $L^2_s(\p\B)$.  
%
%
Given any $\varphi(q)=\sum_{n\in\Z}q^n\widehat{\varphi}(n)\in  L^2_{s} (\p\B) $ we can define the operator  
$H_{\varphi}: H^2(\p\B) \to H^2_- (\p\B)$
as
\begin{equation}\label{projectionin}
H_{\varphi}f:=\mathbb{P}_- ( \varphi \star f )=\sum_{n < 0}q^n\sum_{k \ge 0}\widehat{\varphi}(n-k)\widehat{f}(k).
\end{equation} 
where the symbol $\star$ denotes an appropriate multiplication operation between slice functions, and $\mathbb{P}_-$ is the orthogonal projection from $L^2_s(\p\B)$ onto $H^2_-(\p\B)$.

Translating Theorem \ref{nehariintro} from sequences in $\ell^2(\N, \HH)$  to functions in $L^2_s(\p\B)$, allows us to prove our main result.
\begin{teo}\label{mainintro}
Let $\varphi \in  L^{\infty}_{s}(\p\B)$. Then 
\begin{equation}\label{infintro}
\|H_\varphi\|_{\mathcal B (H^2(\p\B))}=\inf\{\|\varphi-f\|_{L^{\infty}(\p\B)} \, : \, f \in H^{\infty}(\p\B)
\}.
\end{equation}
\end{teo}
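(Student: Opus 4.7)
The plan is to prove the two inequalities in~\eqref{infintro} separately.

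The upper bound $\|H_{\varphi}\|\le \inf_{f\in H^{\infty}}\|\varphi-f\|_{\infty}$ is the easier direction. The key observation is that $H_{f}\equiv 0$ for every $f\in H^{\infty}(\p\B)$: if $f=\sum_{k\ge 0}q^{k}\widehat{f}(k)$ and $g\in H^{2}(\p\B)$, then $f\star g$ has only non-negative Fourier modes, so $\mathbb{P}_{-}(f\star g)=0$, and consequently $H_{\varphi}=H_{\varphi-f}$. Combining this with the general estimate $\|H_{\psi}\|_{\mathcal{B}(H^{2})}\le \|\psi\|_{L^{\infty}}$ valid for every $\psi\in L^{\infty}_{s}(\p\B)$ --- a consequence of $\|\mathbb{P}_{-}\|=1$ together with the contractivity of the star product $g\mapsto \psi\star g$ on $L^{2}_{s}(\p\B)$ --- one obtains $\|H_{\varphi}\|\le \|\varphi-f\|_{\infty}$, and taking the infimum over $f\in H^{\infty}(\p\B)$ gives the first inequality.

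For the reverse inequality one runs the classical duality argument, this time using Theorem~\ref{dualsintro}. Consider the right $\HH$--linear functional $L$ on the subspace $\mathcal{D}$ of $L^{1}_{s}(\p\B)$ generated by star products of the form $g\star \overline{h}$, $g\in H^{2}(\p\B)$, $h\in H^{2}_{-}(\p\B)$, defined by
\[
L(g\star\overline{h}) := \langle H_{\varphi}g,h\rangle_{L^{2}_{s}}.
\]
Cauchy--Schwarz gives $|L(g\star\overline{h})|\le \|H_{\varphi}\|\,\|g\|_{2}\|h\|_{2}$ on elementary tensors. Given an approximate factorization of an arbitrary $\phi\in\mathcal{D}$ as a finite sum $\sum_{j}g_{j}\star\overline{h_{j}}$ with $\sum_{j}\|g_{j}\|_{2}\|h_{j}\|_{2}$ close to $\|\phi\|_{L^{1}_{s}}$, one upgrades the elementary estimate to $|L(\phi)|\le \|H_{\varphi}\|\,\|\phi\|_{L^{1}_{s}}$ on all of $\mathcal{D}$. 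After extending $L$ by Hahn--Banach to the whole $L^{1}_{s}(\p\B)$ without increasing its norm, Theorem~\ref{dualsintro} represents it as integration against some $\psi\in L^{\infty}_{s}(\p\B)$ with $\|\psi\|_{\infty}\le \|H_{\varphi}\|$. A direct computation of $L$ on the monomials $q^{k}\star \overline{q^{-1-m}}$ identifies the negative-index Fourier coefficients of $\psi$ with those of $\varphi$, whence $\varphi-\psi\in H^{\infty}(\p\B)$ and $\|\varphi-(\varphi-\psi)\|_{\infty}=\|\psi\|_{\infty}\le \|H_{\varphi}\|$.

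\emph{Main obstacle.} The crux of the argument is the bound $|L(\phi)|\le \|H_{\varphi}\|\,\|\phi\|_{L^{1}_{s}}$ on $\mathcal{D}$, which in the commutative theory follows effortlessly from the outer factorization of an $H^{1}$ function as a product of two $H^{2}$ functions with matching norms. The absence of such a factorization in the slice regular setting --- explicitly flagged in the abstract --- forces one to work with approximate finite-sum factorizations; this substitution, together with a careful density argument for $\mathcal{D}$ in a suitable subspace of $L^{1}_{s}(\p\B)$, is the technical heart of the proof. Everything else --- Hahn--Banach extension, the duality $(L^{1}_{s})^{*}\cong L^{\infty}_{s}$ of Theorem~\ref{dualsintro}, and the identification of $\psi-\varphi$ with an element of $H^{\infty}$ --- is then essentially formal.
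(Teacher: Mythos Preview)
Your overall architecture matches the paper's: a direct upper bound, and a Hahn--Banach/duality argument for the lower bound. However, the lower bound as you have written it has a genuine gap rooted in non-commutativity. You define $L(g\star\overline{h}):=\langle H_\varphi g,h\rangle_{L^2_s}$, but in the quaternionic setting this is \emph{not} a function of the product alone. In Fourier coefficients
\[
\langle H_\varphi g,h\rangle_{L^2_s}=\sum_{n<0}\overline{\widehat h(n)}\sum_{k\ge 0}\widehat\varphi(n-k)\,\widehat g(k),
\]
and the symbol $\widehat\varphi$ is sandwiched between $\overline{\widehat h}$ and $\widehat g$; there is no way to regroup this into a pairing of some fixed function against a $\star$-product of $g$ and $h$ (or $h^c$). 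Concretely, with $g_1=1,\ h_1=q^{-1}c$ and $g_2=\overline c,\ h_2=q^{-1}$ one has $g_1\star h_1^c=g_2\star h_2^c=q^{-1}\overline c$, yet $\langle H_\varphi g_1,h_1\rangle=\overline c\,\widehat\varphi(-1)$ while $\langle H_\varphi g_2,h_2\rangle=\widehat\varphi(-1)\,\overline c$. Hence $L$ is ill-defined as soon as $\widehat\varphi(-1)\notin\rr$, and the subsequent ``approximate factorization'' step never gets off the ground.

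The paper avoids this by placing the symbol on the \emph{outside} of the pairing. In the $\Gamma_\alpha$ formulation (Theorem~\ref{nehari1}) it considers $G_\alpha(a,b)=\langle f\star g,\varphi\rangle_{L^2_s}$, which visibly depends only on $f\star g$, and hence yields a bona fide functional $\Lambda_\alpha(h)=\langle h,\varphi\rangle$ on $H^1(\p\B)=H^2\star H^2+H^2\star H^2$ (the two-term factorization being precisely the substitute for inner--outer factorization, imported from \cite{carleson}). Hahn--Banach and Theorem~\ref{dualsintro} then give the $\psi\in L^\infty_s$ with $\|\psi\|_\infty\le\|\Gamma_\alpha\|$, which is the first inequality in~\eqref{normaHfi}; combined with Proposition~\ref{normainfinito} this yields the lower bound in~\eqref{infintro}.

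A smaller point on the upper bound: the claimed ``contractivity of $g\mapsto\psi\star g$ on $L^2_s$'' is true but not immediate, since $\psi\star g$ is not a pointwise product. The paper obtains $\|\psi\star g\|_2\le\|\psi\|_\infty\|g\|_2$ by passing to regular conjugates, $\|\psi\star g\|_2=\|g^c\star\psi^c\|_2$, and then using Proposition~\ref{trasf} to write $g^c\star\psi^c(q)=g^c(q)\,\psi^c\big(g^c(q)^{-1}qg^c(q)\big)$, together with $\|g^c\|_2=\|g\|_2$ and $\|\psi^c\|_\infty=\|\psi\|_\infty$ (Proposition~\ref{normainfinito}).
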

\noindent Here $H^{\infty}(\p\B)$ denotes the space of bounded slice regular functions on $\B$, introduced in \cite{hardy}, viewed as a subspace of $L^{\infty}_s(\p\B)$. 
By compactness, the infimum in equation \eqref{infintro} is attained for any $\varphi \in L^{\infty}_{s}(\p\B)$: there exists always a regular function $g\in H^{\infty}(\p\B)$ such that 
\[\|H_\varphi\|_{\mathcal B (H^2(\p\B))}=\|\varphi-g\|_{L^{\infty}(\p\B)}={\rm dist}_{L^{\infty}}(\varphi, H^{\infty}(\p\B)) .\]
A function $g$ realizing the distance of $\varphi$ from $H^{\infty}(\p\B)$ is called an {\em $L^{\infty}$-best approximation of $\varphi$ 
by a slice regular function}.

We conclude this article with some further results. The first one concerns the uniqueness of a best approximation, and the second one gives a characterization of bounded Hankel operators in terms of shift operators.

%

The paper is structured as follows: in Section \ref{prelim} we give some background on slice and slice regular functions; in Section \ref{LP} we establish the setting of our study, we introduce slice $L^p$ and $BMO$ (equivalence classes of) measurable functions and we prove some functional analytical results needed in the sequel; Section \ref{main} is devoted to prove Theorems \ref{nehariintro} and \ref{mainintro}, together with  
some further results concerning bounded Hankel operators.
 
\section{Preliminaries}\label{prelim}
In this section we recall the definitions of slice and of slice regular functions over the quaternions $\HH$, together 
with some basic properties. 
Let $\s$ denote the two-dimensional sphere of imaginary units of $\HH$, $\s=\{q\in \HH \, | \, q^2=-1\}$. 
One can ``slice'' the space $\HH$ in copies of the complex plane that intersect along the real axis,  
\[ \HH=\bigcup_{I\in \s}(\rr+\rr I),  \hskip 1 cm \rr=\bigcap_{I\in \s}(\rr+\rr I),\]
where $L_I:=\rr+\rr I\cong \C$, for any $I\in\s$.  
Each element $q\in \HH$ can be expressed as $q=x+yI_q$, where $x,y$ are real (if $q\in\rr$, then $y=0$) and $I_q$ is an imaginary unit. To have uniqueness outside the real axis we can choose $y>0$. The conjugate of $q$ is $\bar q =x-yI$ and its modulus is given by $|q|^2=q\bar q=x^2+y^2$. Every non-zero quaternion has a multiplicative inverse, denoted by $q^{-1}$, that can be computed as $q^{-1}=\overline{q}/|q|^2$, hence providing $\HH$ with the structure of a skew field over $\rr$.
For any $\Omega\subseteq \HH$ and  for every $I\in\s$ we will denote by $\Omega_I$ the intersection $\Omega \cap L_I$.

Let us focus our attention to quaternion valued functions defined on the boundary $\p\B$ of the quaternionic unit ball $\B:=\{q\in \HH \ : \ |q|<1\}$. This domain present an important symmetry property: for any quaternion $x+yI$ contained in $\p\B$, the entire two dimensional sphere $x+y\s$ is contained in $\p\B$. On subsets presenting such a symmetry it is possible to give the definition of slice functions.    
This class of functions was introduced and studied in a more general setting by Ghiloni and Perotti in \cite{ghiloniperotti}. We do not intend to give here all the details. 
In this paper a function $f:\p\B \to \HH$ will be called a {\em slice function} if it is affine on spheres of the form $x+y\s$ contained in $\p\B$ with respect to the imaginary unit. More precisely, $f$ is slice if for any $I,J\in \s$ and for any $x+yI\in \p\B$
\begin{equation}\label{repfor}
\begin{aligned}
f(x+yJ)&=\frac{1}{2}\left(f(x+yI)+f(x-yI)\right)+\frac{J I}{2}\left(f(x-yI)-f(x+yI)\right)\\
&=\frac{1-JI}{2}f(x+yI)+\frac{1+JI}{2}f(x-yI).
\end{aligned}
\end{equation}
In the sequel we will adopt equation \eqref{repfor} as definition of slice functions.

A concrete example of slice functions is given by convergent (for almost every $t$) power series of the form $e^{It}\mapsto \sum_{n\in \mathbb{Z}}e^{Int}a_n$, with quaternionic coefficients $a_n$ ( in fact, for any $n$, $e^{Int}=\cos(nt)+ I\sin(nt)$).

The restriction to $\p\B$ of slice regular functions on open balls centered at the origin $B_r=\{q\in \HH \ : \ |q|<r\}$ gives an important class of examples of slice function:
a function $f:B_r \to \HH$ is called {\em slice regular} if for any $I\in\s$ the restriction $f_I$ of $f$ to $B_r\cap L_I$
is {\em holomorphic}, i.e. it has continuous partial derivatives and it is such that
\[\overline{\p}_If_I(x+yI)=\frac{1}{2}\left(\frac{\p}{\p x}+I\frac{\p}{\p y}\right)f_I(x+yI)=0\]
for all $x+yI\in B_r
\cap L_I$.
In fact (see \cite{libroGSS}), a function $f$ is slice regular on $B_r$ if and only if it has a power series expansion
$f(q)=\sum^{\infty}_{n= 0}q^na_n$ converging in $B_r$ and such functions clearly satisfy representation formula \eqref{repfor} on the entire $B_r$.

We point out that equation \eqref{repfor} also furnishes a tool to uniquely extend a function $f_I$ defined on a circle $\p\B_I$ to a slice function $f:=\ext(f_I)$ defined on the entire $\p\B$,
\begin{equation*} 
\ext(f_I)(x+yJ)=\frac{1-JI}{2}f_I(x+yI)+\frac{1+JI}{2}f_I(x-yI).
\end{equation*}
The operator $\ext$ was introduced in the setting of slice regular functions in \cite{ext}.
Since pointwise product of functions does not preserve the classes of slice and of slice regular functions, 
a new multiplication operation is defined, the so called {\em regular} or $\star$-{\em product}. In the case of slice regular functions defined on $B_r$ it has an expression given in terms of their power series expansion,  which can be naturally extended to (a.e.) convergent power series of the form 
$\sum_{n\in \mathbb{Z}}qa_n$, with $|q|=1$. 
Let $f(q)=\sum_{n\in \Z}q^n a_n$ and $ g(q)=\sum_{n\in \Z}q^n b_n$ be two slice functions on $\p\B$. Then
\[f \star g (q):= \sum_{n\in \Z}q^n \sum_{k \in \Z} a_k b_{n-k}.\]
%
As for slice regular power series the $\star$-product is related to the standard pointwise product by the following formula.
\begin{pro}\label{trasf}
Let $f,g$ be slice functions on $\p\B$ admitting expansions $f(q)=\sum_{n\in \Z}q^n a_n$  and $g(q)=\sum_{n\in \Z}q^n b_n$. 
Then
\[f\star g(q)=\left\{\begin{array}{l r}
0 & \text{if $f(q)=0 $}\\
f(q)g(f(q)^{-1}qf(q)) & \text{if $f(q)\neq 0 $}
                \end{array}\right.
 \] 
\end{pro}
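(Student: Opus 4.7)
The plan is to prove the identity by a direct algebraic manipulation of the double series defining the $\star$-product, using the key observation that conjugation of $q$ by an invertible quaternion commutes with taking integer powers.

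First I would compute $f\star g(q)$ by expanding and reindexing the double sum. Starting from
\[f\star g(q) = \sum_{n\in\Z} q^n \sum_{k\in\Z} a_k b_{n-k} = \sum_{n,k}q^n a_k b_{n-k},\]
I substitute $m=n-k$ (so $n=k+m$) to obtain
\[f\star g(q)=\sum_{m,k} q^{k+m} a_k b_m = \sum_m \Bigl(\sum_k q^{m+k}a_k\Bigr) b_m = \sum_m q^m \Bigl(\sum_k q^k a_k\Bigr) b_m = \sum_m q^m f(q)\, b_m,\]
where the middle step uses only the associativity of quaternionic multiplication and the fact that $q^m q^k = q^{m+k}$ (note that I did \emph{not} need to commute $q^m$ past the coefficients $a_k$). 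This reduces the problem to evaluating $\sum_m q^m\,p\,b_m$ where $p:=f(q)$.

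The case $p=f(q)=0$ is then immediate: the reduced expression vanishes term by term, proving $f\star g(q)=0$. For the case $p\neq 0$, I invoke the elementary identity
\[(p^{-1}q p)^m = p^{-1} q^m p \qquad \text{for every } m\in\Z,\]
valid because $p$ is an invertible quaternion. Substituting this into the series for $g$ evaluated at $p^{-1}qp$ yields
\[g(p^{-1}qp) = \sum_{m\in\Z}(p^{-1}qp)^m b_m = \sum_m p^{-1} q^m p\, b_m = p^{-1} \sum_m q^m p\, b_m,\]
so that $p\cdot g(p^{-1}qp) = \sum_m q^m p\, b_m$, which matches the expression obtained in the previous step. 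Hence $f\star g(q) = f(q)\,g(f(q)^{-1} q f(q))$.

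The one technical issue I expect is the justification of the reindexing of the double sum in the first step: since the series are only assumed to converge almost everywhere (and not necessarily absolutely), swapping orders of summation requires some care. I would handle this by noting that the identity is a purely formal one at the level of (doubly infinite) series of quaternionic monomials, so whenever both sides converge as absolutely convergent series at a point $q\in\p\B$ the equality holds; this is the setting in which the proposition is used later, and it suffices for the applications. Apart from this convergence bookkeeping, the proof is essentially the two-line computation above.
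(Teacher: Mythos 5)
Your proof is correct and is essentially the paper's own argument: the paper simply states that the result ``follows by the same computation that gives the result in the slice regular case,'' and the computation you write out (reindexing the double sum to get $\sum_m q^m f(q) b_m$ and then using $(p^{-1}qp)^m = p^{-1}q^m p$ for $p=f(q)\neq 0$) is precisely that computation, extended to negative powers. Your remark about convergence bookkeeping is a reasonable extra precaution that the paper does not address explicitly.
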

\noindent The proof follows by the same computation that gives the result in the slice regular case (see\cite{libroGSS}).
Also the notions of regular conjugation and of symmetrization can be generalized to the case of power series with negative powers. If $f(q)=\sum_{n\in \Z}q^n a_n$ is a slice function on $\p\B$, the {\em regular conjugate} and the {\em symmetrization} of $f$ are 
\[f^c(q):=\sum_{n\in \Z}q^n \overline{a}_n \ \text{ and }\  f^s(q):=f^c\star f (q)=f\star f^c (q) \ \text{ respectively.}\] 
The reciprocal $f^{-\star}$ of $f(q)=\sum_{n\in \Z}q^na_n$ with respect to the $\star$-product is then given, as in the regular case, by
\[f^{-\star}(q)=\frac{1}{f\star f^c(q)}f^c(q).\]
The function $f^{-\star}$ is defined on $\p\B \setminus \{q\in \p\B \ | \ f\star f^c(q)=0\}$ and $f\star f^{-\star}=f\star f^{-\star}=1$. 
We conclude this preliminary section with the following proposition, that can be proved by direct computation, as in the case of slice regular power series.
\begin{pro}\label{scambio}
Let $f,g$ be slice functions on $\p\B$ admitting expansions $f(q)=\sum_{n\in \Z}q^n a_n$  and $g(q)=\sum_{n\in \Z}q^n b_n$. 
Then $(f\star g)^c=g^c\star f^c.$
\end{pro}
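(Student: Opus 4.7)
The plan is to verify this by a direct coefficient-by-coefficient computation, mirroring the argument used in the slice regular case and relying on the single non-trivial algebraic fact that quaternionic conjugation reverses the order of multiplication.

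First, I would expand $f \star g$ by the definition: writing $(f \star g)(q) = \sum_{n \in \Z} q^n c_n$ with $c_n = \sum_{k \in \Z} a_k b_{n-k}$. Applying the definition of the regular conjugate then gives
\[
(f \star g)^c(q) = \sum_{n \in \Z} q^n \, \overline{c_n} = \sum_{n \in \Z} q^n \sum_{k \in \Z} \overline{a_k b_{n-k}}.
\]
The key algebraic input is the antihomomorphism property of quaternionic conjugation, $\overline{pq} = \bar q\, \bar p$, which converts each term into $\bar b_{n-k}\, \bar a_k$.

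Next, I would compute $g^c \star f^c$ directly from its definition: since $g^c(q) = \sum_n q^n \bar b_n$ and $f^c(q) = \sum_n q^n \bar a_n$, the $\star$-product gives
\[
(g^c \star f^c)(q) = \sum_{n \in \Z} q^n \sum_{k \in \Z} \bar b_k \, \bar a_{n-k}.
\]
A reindexing $j = n-k$ in the inner sum produces $\sum_j \bar b_{n-j}\, \bar a_j$, which is exactly the expression obtained above for the $n$-th coefficient of $(f \star g)^c$. Matching coefficients term by term concludes the proof.

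There is essentially no obstacle here: the statement is purely formal on the level of Laurent-type expansions, and the only algebraic subtlety is the order reversal under conjugation. Convergence issues do not enter, because the identity is being verified at the level of formal series coefficients in the same sense in which the $\star$-product was defined in the preceding paragraph; wherever both sides make sense as slice functions on $\partial \B$, the identified coefficients guarantee pointwise equality almost everywhere.
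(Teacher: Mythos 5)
Your proof is correct and is exactly the ``direct computation'' the paper invokes: expand both sides coefficientwise, use $\overline{pq}=\bar q\,\bar p$, and reindex the convolution sum. No discrepancies with the paper's (omitted) argument.
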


\section{Slice $L^p$ functions}\label{LP}
In this section we establish the setting of our study. In particular we introduce the spaces of equivalence classes of {\em slice} $L^p$ functions and that of {\em slice BMO} functions.  

Let $I\in \s$. For $1\le p< +\infty$, we will denote by $L^p(\p\B_I)$ the space of quaternion valued (equivalence classes of) measurable 
functions $f:\p\B_I \to \HH$ such that $\|f\|^p_{L^p(\p\B_I)}:=\int_{0}^{2\pi}|f(e^{It})|^pdt<+\infty$ and by $L^p(\p\B)$ the space of (equivalence classes of) measurable functions $f:\p\B \to \HH$ such that \[\|f\|^p_{L^p(\p\B)}:=\int_{\p\B}|f(q)|^pd\Sigma(q)<+\infty,\] 
where $d\Sigma$ is the volume form naturally associated with the Hardy space $H^2(\B)$, 
introduced in \cite{metrica}: if $q=e^{It}$, then $d\Sigma(q)=d\sigma(I)dt$ and $d\Sigma$, $d\sigma$ are normalized so that $d\Sigma(\p\B)=d\sigma(\s)=1$. 
For $p=\infty$, we will denote by $L^{\infty}(\p\B_I)$ the space of essentially bounded (equivalence classes of) measurable functions on $\p\B_I$. 
The space $L^{\infty}(\p\B)$ contains (equivalence classes of) measurable functions $f:\p\B\to \HH$ such that
\[||f||_{L^{\infty}(\p\B)}:=\ess \!\sup_{q\in \p\B}|f(q)|<+\infty,\]
where the essential supremum is taken with respect to the measure $d\Sigma$:
\[\ess \!\sup_{q\in \p\B}|f(q)|=\inf\{ \lambda \ge 0 \, : \, \Sigma\left(\{q\in \p\B \, : \, |f(q)|>\lambda\}\right)=0 \}.\]

Quaternionic $L^p$ spaces share many of the basic properties of classical complex $L^p$ spaces. 
In this paper we will not investigate the general theory of these function spaces, in particular we will focus our attention to their subspaces of slice functions. We recall here that in \cite{proj} the $L^p$ norm of the orthogonal projection from the space $L^2(\p\B)$ to its closed subspace $L^2_s(\p\B)$ is studied.

A first (easy) remark is that the restriction to a single slice of an element of $L^p(\p\B)$ does not necessarily belong to $L^p(\p\B_I)$ for every $I\in \s$ (but only for $\sigma$-almost every $I$), while clearly the opposite implication holds true.
The two notions coincide in the case of slice functions.
For $1\le p \le +\infty$, let $L_s^p(\p\B)$ denote the space of measurable slice functions bounded in $L^p$-norm.
Using representation formula \eqref{repfor} it is not difficult to prove
\begin{pro}\label{sliceLP}
For any $1\le p\le +\infty$, a slice function $f:\p\B\to \HH$ belongs to $L^p(\p\B)$ if and only if it belongs to $L^p(\p\B_I)$ for one and hence for any $I\in \s$.
\end{pro}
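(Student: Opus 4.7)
The plan is to derive both directions from the representation formula \eqref{repfor}. The key pointwise inequality, valid for every $I,J\in\s$ and every $x+yI\in\p\B$, is
\[
|f(x+yJ)|\;\le\;\tfrac{|1-JI|}{2}\,|f(x+yI)|+\tfrac{|1+JI|}{2}\,|f(x-yI)|\;\le\;|f(x+yI)|+|f(x-yI)|,
\]
where the last step is just the triangle inequality bound $|1\pm JI|\le|1|+|JI|=2$. This single inequality drives both implications, so the whole argument is essentially a bookkeeping exercise on top of \eqref{repfor}.

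For the direction ``$f|_{\p\B_I}\in L^p(\p\B_I) \Rightarrow f\in L^p(\p\B)$'', I would parametrize $\p\B$ as $q=e^{Kt}$ with $(K,t)\in\s\times[0,2\pi]$, so that $d\Sigma=d\sigma(K)\,dt$, $x+yI=e^{It}$ and $x-yI=e^{-It}$. For $1\le p<\infty$, raising the pointwise bound to the $p$-th power and applying the elementary inequality $|a+b|^p\le 2^{p-1}(|a|^p+|b|^p)$, then integrating against $d\Sigma$, using $\sigma(\s)=1$, and finally the change of variable $t\mapsto -t$ in the second term yields
\[
\|f\|_{L^p(\p\B)}^p\;\le\;2^p\,\|f\|_{L^p(\p\B_I)}^p.
\]
For $p=\infty$ the pointwise bound is used as is, delivering $\|f\|_{L^\infty(\p\B)}\le 2\|f\|_{L^\infty(\p\B_I)}$ after taking essential suprema.

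Conversely, if $f\in L^p(\p\B)$, Fubini--Tonelli (for $p<\infty$) or the very definition of the essential supremum via $d\Sigma=d\sigma\,dt$ (for $p=\infty$) produces some imaginary unit $J\in\s$ such that $f|_{\p\B_J}\in L^p(\p\B_J)$. Applying the same pointwise bound with the roles of $I$ and $J$ swapped propagates this to \emph{every} $I\in\s$, with $\|f\|_{L^p(\p\B_I)}\le 2\|f\|_{L^p(\p\B_J)}$. The only mildly subtle point is exactly this last step: Fubini delivers only one good slice, and it is precisely the slice structure of $f$ encoded in \eqref{repfor} that promotes this single piece of information to a uniform statement over all $I\in\s$.
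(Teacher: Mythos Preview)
Your proof is correct and is precisely the argument the paper has in mind: the paper offers no detailed proof, stating only that the result follows from the representation formula \eqref{repfor}, and your pointwise bound $|f(x+yJ)|\le|f(x+yI)|+|f(x-yI)|$ together with the Fubini step to secure one good slice (then propagated to all slices via the same bound) is exactly how one fills in those details.
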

As in the case of $H^p$ spaces of slice regular functions, studied in \cite{hardy}, it can be proven that $L_s^p(\p\B)$ spaces have a natural structure of quaternionic right Banach spaces. 
%
In the case $p=2$, as in the case of the Hardy space $H^2(\B)$ (see \cite{milanesishur} and \cite{milanesipontryagin}) the inner product 
\[\langle f,g \rangle_{L^2(\p\B)}:=\int_{\p\B}\overline{g}f d\Sigma\]
gives to $L^2(\p\B)$ the structure of a quaternionic Hilber space.
Moreover, when considering slice $L^2$ functions, we obtain the following result. 
\begin{pro}
\[ L^2_{s} (\p \B)=\Big\{f\in L^2(\p\B) : \ \text{for a.e. $q\in \p\B$,}\ f(q)=\sum_{n\in \Z}q^na_n  \ \text{with }\ \{a_n\}\in \ell^2(\Z, \HH) \Big\}.\]
In particular, $\{q^n\}_{n\in \Z}$ is an orthonormal basis of $ L^2_{s} (\p\B)$.
\end{pro}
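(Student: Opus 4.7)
The plan is to pass through a single slice, use classical $L^{2}$-Fourier analysis there (for quaternion-valued functions, after splitting into two $\C_{I}$-valued components), and then use the slice representation formula to propagate the expansion to all of $\p\B$.

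First I would verify that the system $\{q^{n}\}_{n\in\Z}$ is orthonormal in $L^{2}(\p\B)$. Writing $q=e^{It}$, one has $\overline{q^{m}}q^{n}=e^{I(n-m)t}=\cos((n-m)t)+I\sin((n-m)t)$, and integrating against $d\Sigma=d\sigma(I)\,dt$ with the given normalization gives $\delta_{n,m}$. This also shows that finite sums $\sum_{|n|\le N}q^{n}a_{n}$ lie in $L^{2}_{s}(\p\B)$ with norm $\bigl(\sum_{|n|\le N}|a_{n}|^{2}\bigr)^{1/2}$, so the inclusion $\supseteq$ of the set identity follows from completeness of the $a_{n}\in\HH$ in $\ell^{2}$.

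For the nontrivial inclusion $\subseteq$, I would fix an arbitrary $I\in\s$ and use Proposition~\ref{sliceLP} to conclude that for $f\in L^{2}_{s}(\p\B)$ the restriction $f_{I}\colon\p\B_{I}\to\HH$ lies in $L^{2}(\p\B_{I})$. Fixing $J\in\s$ with $JI=-IJ$, the decomposition $\HH=L_{I}\oplus L_{I}J$ writes $f_{I}=F_{1}+F_{2}J$ with $F_{1},F_{2}\in L^{2}(\p\B_{I},L_{I})$; applying classical complex Fourier theory to each component (in the complex plane $L_{I}\cong\C$) gives Fourier coefficients $a_{n}^{(1)},a_{n}^{(2)}\in L_{I}$ with $\sum|a_{n}^{(k)}|^{2}<\infty$, and setting $a_{n}=a_{n}^{(1)}+a_{n}^{(2)}J\in\HH$ yields an $\ell^{2}(\Z,\HH)$ sequence with
\[f_{I}(e^{It})=\sum_{n\in\Z}e^{Int}a_{n}\]
converging in $L^{2}(\p\B_{I})$ (and, by passing to a subsequence of partial sums, a.e.\ on $\p\B_{I}$).

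Next I would propagate this to all of $\p\B$ using the slice property. The computation
\[\frac{1-JI}{2}e^{Int}+\frac{1+JI}{2}e^{-Int}=\cos(nt)+J\sin(nt)=e^{Jnt}\]
shows $\ext(e^{In\,\cdot})(e^{Jt})=e^{Jnt}$, so applying $\ext$ termwise to the Fourier expansion (which is justified by $L^{2}$-continuity of $\ext$, itself a consequence of Proposition~\ref{sliceLP} combined with the orthonormality computation above) gives
\[f(e^{Jt})=\ext(f_{I})(e^{Jt})=\sum_{n\in\Z}e^{Jnt}a_{n}=\sum_{n\in\Z}q^{n}a_{n}\]
for all $J\in\s$, with convergence in $L^{2}(\p\B)$ and a.e.\ pointwise along a subsequence. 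The two inclusions together establish the set equality, and orthonormality plus the just-proved completeness of $\{q^{n}\}$ give the orthonormal basis assertion.

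The only delicate point is the passage from one-slice Fourier expansion to a global expansion on $\p\B$: one must check that $\ext$ sends the $\ell^{2}$-Fourier coefficients on $\p\B_{I}$ to coefficients of a genuine $L^{2}$ series on $\p\B$, without ``spreading'' of the $L^{2}$ mass across slices. This is exactly the content of Proposition~\ref{sliceLP} (which makes all slice $L^{p}$ norms equivalent for slice functions) together with the identity $\ext(e^{In\cdot})=q^{n}$; every other step is a routine application of classical Fourier theory to quaternion-valued functions on a circle.
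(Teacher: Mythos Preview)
Your proposal is correct and follows essentially the same route as the paper: restrict to a slice, split with respect to an orthogonal imaginary unit $J\perp I$, apply classical complex Fourier theory to each $L_I$-valued component, and then use the representation formula (equivalently, the operator $\ext$) together with the identity $\ext(e^{In\,\cdot})=q^{n}$ to propagate the expansion to all of $\p\B$; the orthonormality computation for $\{q^{n}\}$ is identical. The only cosmetic difference is that you invoke Proposition~\ref{sliceLP} to pass to an arbitrary slice whereas the paper first picks one slice on which the restriction is $L^{2}$ by Fubini, but since the function is already assumed slice this distinction is immaterial.
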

\begin{proof}
On the one hand, il $f \in L^2(\p\B)$, recalling that $d\Sigma$ is the product of $dt$ with $d\sigma$, we get that there exists $I\in \s$ such that $f\in L^2(\p\B_I)$. Then if $f$ splits on $L_I$ with respect to $J\in \s$, $J\perp I$, as 
$f(z)=F(z)+G(z)J$, the orthogonality of $I$ and $J$ guarantees that the splitting components $F,G:\p\B_I\to L_I$ are complex $L^2$ functions. Therefore they admit a representation of the form 
\[F(z)=\sum_{n\in \Z}z^n\widehat F(n), \quad G(z)=\sum_{n\in \Z}z^n\widehat G(n) \]  
where $z\in \p\B_I$ and $\widehat F (n)$, $\widehat G (n)$ denote the $n$-th Fourier coefficient of $F$ and $G$ respectively.
For any $z\in \p\B_I$ we can therefore write
\[f(z)=\sum_{n\in \Z}z^n(\widehat F (n)+\widehat G (n)J)=:\sum_{n\in \Z}z^n\widehat f (n),\]
where the sequence $\{\widehat f (n)\}_{n\in \Z}$ belongs to $\ell^2(\Z,\HH)$ thanks to fact that $J \perp I$ and $\{\widehat F (n)\}_{n\in \Z}$ and $\{\widehat G (n)\}_{n\in \Z}$ belong to $\ell^2(\Z,\C)$.
If furthermore $f$ satisfies equation \eqref{repfor}, since every $q=x+yJ\in \p\B$ can be written as $q=\frac{1-JI}{2}(x+yI)+\frac{1+JI}{2}(x-yI)$, then the power series expression of $f$ can be extended to the whole $\p\B$ as
\[f(q)=\sum_{n\in \Z}q^n\widehat f(n).\]
Notice that this also shows that the coefficients $\widehat{f}(n)$ do not depend on the choice of the slice $L_I$.

On the other hand, if $f(q)=\sum_{n\in \Z}q^na_n $ with $q\in \p\B$ and $\{a_n\}_{n\in \Z}\in \ell^2(\Z,\HH)$, then the restriction $f_I$ of $f$ to $L_I$ belongs to $L^2(\p\B_I,\HH)$ for any $I\in \s$. In fact, for any $n,m\in \Z$, $\int_{0}^{2\pi}e^{-ntI}e^{mtI}dt=2\pi \delta_{n}^{m}$ (which proves the orthonormality of functions $q\mapsto q^n$), and hence
\[\Big\|\sum_{n\in \Z}q^n a_n \Big\|^2_{L^2(\p\B_I, \HH)}=\sum_{n\in \Z}|a_n|^2=\left\|\{a_n\}_{n\in \Z}\right\|^2_{\ell^2(\Z,\HH)}<+\infty.\]
Equality $x+yJ=\frac{1-JI}{2}(x+yI)+\frac{1+JI}{2}(x-yI)$, for any $I,J\in \s$, immediately implies that $f$ is a slice function.
\end{proof}

In particular $ L^2_{s} (\p\B)$ endowed with the inner product  
\[ \Big \langle \sum_{n\in \Z}q^n a_n,\sum_{n\in \Z}q^n b_n\Big \rangle_{ L^2_{s} (\p\B)}=\sum_{n\in \Z}\bar b_n a_n\]
is a quaternionic Hilbert space.
We point out that for any $f(q)=\sum_{n\in \Z}q^n a_n, g(q)=\sum_{n\in \Z}q^n b_n \in  L^2_{s} (\p\B)$, their inner product can also be expressed in an integral form as
\[\Big \langle \sum_{n\in \Z}q^n a_n,\sum_{n\in \Z}q^n b_n\Big \rangle_{ L^2_{s} (\p\B)}=\frac{1}{2\pi}\int_{0}^{2\pi}\overline{g(e^{I\theta})}f(e^{I\theta})d\theta \]
where $I$ is any imaginary unit.   
Since it does not depend on the imaginary unit, we can also write 
\begin{align}\label{normaint}
\Big \langle \sum_{n\in \Z}q^n a_n,\sum_{n\in \Z}q^n b_n\Big \rangle_{ L^2_{s} (\p\B)}&=\int_{\s}d\sigma\frac{1}{2\pi}\int_{0}^{2\pi}\overline{g(e^{I\theta})}f(e^{I\theta})d\theta=\int_{\p\B}\overline{g(q)}f(q)d\Sigma(q)= \langle f, g \rangle_{ L^2(\p\B)}. 
\end{align}
Notice that the $L^2$ norm of a function $f\in L^2_s(\p\B)$ depends only on the moduli of the coefficients of its power series expansion. Hence $f\in L^2(\p\B)$ if and only if its regular conjugate $f^c$ does, and moreover 
\[\|f\|^2_{L^2_s(\p\B)}=\sum_{n\in \Z}|\widehat{f}(n)|^2=\sum_{n\in \Z}\left|\overline{\widehat{f}(n)}\right|^2=\|f^c\|^2_{L^2_s(\p\B)}.\]
We can split $ L^2_{s} (\p\B) $ into two orthogonal subspaces:
the Hardy space 
\[ H^2(\p\BB):= \Big\{ f \in L^2(\p\B) :\ \text{for a.e. $q\in\p\B,$}\  f(q)=\sum_{n\ge 0}q^na_n \ \text{with}\ \{a_n\}\in \ell^2(\N, \HH) \Big\}\]
and its orthogonal complement (with respect to the inner product) in $ L^2_{s} (\p\B)$
\[ H^2_-(\p \BB) :=  \Big\{ f \in L^2(\p\B) :\ \text{for a.e. $q\in\p\B,$}\  f(q)=\sum_{n< 0}q^na_n \ \text{with}\ \{a_{-n}\}\in \ell^2(\N, \HH) \Big\}.\]

In what follows we will denote by $\mathbb{P}_+$ and $\mathbb P _-$ the orthogonal projections from $ L^2_{s}(\p\B)$ onto $H^2(\p\B)$ and $H^2_-(\p\B)$ respectively.

Since slice regular functions on $\B$ are characterized by having a power series expansion converging in $\B$, it is natural to identify
the space $H^2(\p\B)$ with the Hardy space of the unit ball $H^2(\B)$.
Moreover, since $L^{\infty}_{s}(\p\B) \subset  L^2_{s}(\p\B),$
the space $H^{\infty}(\p\B)= L^{\infty}_{s}(\p\B)\cap H^{2}(\p\B)$ of slice $L^{\infty}$ (equivalence classes of) functions belonging to the span of $\{q^n \}_{n\ge 0}$ can be identified with the space of bounded slice regular functions $H^{\infty}(\B)$.

For any $0< p \le +\infty$ a notion of quaternionic $H^p$ space is given in \cite{hardy}. In particular in \cite{hardy} it is proven that the radial limit of slice regular functions in $H^p$ exists on each slice along almost any radius and it belongs to $L^p(\p\B_I)$ for any $0< p \le +\infty$. 
Taking into account that functions in $H^p(\B)$ satisfy the Representation Formula \eqref{repfor}, we easily conclude that their (a.e.) radial limit belongs in fact to the space $ L^p_{s}(\p\B)$. 
In particular, for any $1\le p \le +\infty$, identifying each function with its radial limit, the space $H^p(\B)$ can be viewed as a subspace of $L^p(\p\B)$ , in the sequel denoted by $H^p(\p\B)$.
Another fact about functions in $H^p(\B)$ that we will use in the sequel is that the radial limit of a function that does not vanish identically is almost everywhere nonvanishing (see again \cite{hardy}).

Our next goal is to show that the dual space of slice $L^1$ functions is the space of slice $L^{\infty}$ functions. To this aim we first need a couple of preliminary results.

\begin{pro}\label{density}
The space $L^2_s(\p\B)$ is a dense subspace of $L^1_s(\p\B)$
\end{pro}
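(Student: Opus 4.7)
The plan is to reduce the approximation problem to a single slice $\p\B_I$, where it becomes the classical density statement on a finite-measure space, and then lift the approximation back to $\p\B$ via the extension operator $\ext$. The inclusion $L^2_s(\p\B)\subseteq L^1_s(\p\B)$ is immediate from Cauchy--Schwarz (since $\Sigma(\p\B)=1$), so only density needs to be proved.

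First I fix any $I\in\s$ and use Proposition \ref{sliceLP} to pass from $f\in L^1_s(\p\B)$ to its restriction $f_I:=f|_{\p\B_I}\in L^1(\p\B_I)$. On the finite-measure circle $\p\B_I$, bounded quaternion-valued functions are dense in $L^1$ (truncate $f_I$ at height $n$ and apply dominated convergence), so I pick $h_n\in L^\infty(\p\B_I)\subseteq L^2(\p\B_I)$ with $h_n\to f_I$ in $L^1(\p\B_I)$ and set $\tilde f_n:=\ext(h_n)$, which is a slice function by construction. The key quantitative step is then to show that $\ext$ maps $L^p(\p\B_I)$ boundedly into $L^p(\p\B)$ uniformly in $1\le p\le\infty$. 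A short calculation gives $|1\pm JI|/2\le 1$ for every $I,J\in\s$, whence the explicit formula for $\ext$ produces the pointwise estimate $|\ext(h)(x+yJ)|\le |h(x+yI)|+|h(x-yI)|$; integrating in $J$ (total mass $\sigma(\s)=1$) and in $t$ then gives $\|\ext(h)\|_{L^p(\p\B)}\le 2\|h\|_{L^p(\p\B_I)}$. In particular each $\tilde f_n$ lies in $L^2_s(\p\B)$, and applying the same estimate to the difference $h_n-f_I$, together with the identity $\ext(f_I)=f$ (which holds because $f$ itself satisfies the representation formula \eqref{repfor}), yields $\|\tilde f_n-f\|_{L^1(\p\B)}\le 2\|h_n-f_I\|_{L^1(\p\B_I)}\to 0$.

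The only conceptual subtlety I anticipate is that one cannot simply truncate $f$ directly on $\p\B$: the level sets $\{|f|\le n\}$ need not be symmetric under the slice structure, so pointwise truncation generally destroys sliceness. Truncating on a single slice and then re-extending via $\ext$ is precisely the operation that repairs this defect, and the uniform-in-$p$ extension estimate above is what upgrades single-slice approximation to full $L^1(\p\B)$ approximation.
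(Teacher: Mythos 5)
Your proof is correct and follows essentially the same route as the paper: restrict to a single slice $\p\B_I$, approximate there by $L^2$ functions, and transport the approximation back to $\p\B$ via $\ext$ with the factor-$2$ estimate coming from $|1\pm JI|\le 2$. The only (harmless) difference is in the slice-wise step, where you truncate $f_I$ directly and use dominated convergence, while the paper splits $f_I=F^I+G^IJ$ and invokes the classical complex $L^2$--$L^1$ density for each component.
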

\begin{proof}
Cauchy-Schwarz inequality implies that as in the complex case, $L^2_s(\p\B) \subset L^1_s(\p\B)$.
To prove the density, let $f \in L^1_s(\p\B)$ and fix $I\in \s$. 
The restriction $f_I$ of $f$ to $\p\B_I$ belongs to $L^1(\p\B_I)$.
Then, if $J\perp I$, and $f_I$ decomposes on $\p\B_I$ as $f_I=F^I+G^IJ$ with $F^I,G^I:\p\B_I \to L_I$, thanks to the orthogonality of $I$ and $J$ we have that both $F^I$ and $G^I$ belong to the complex space $L^1(\p\B_I,L_I)$.
Using the density of the $L^2$ space of the complex unit circle in the $L^1$ space of the complex unit circle, we find two sequences $\{F^I_n\},\{G^I_n\} \in L^2(\p\B_I, L_I)$ converging in $L^1$ norm respectively to $F^I$ and $G^I$. Therefore the sequence $f_{I,n}:=F^I_n+G^I_nJ$ converges in $L^1(\p\B_I)$
to $f_I$. If we consider the extension $f_n:=\ext f_{I,n}$ of $f_{I,n}$ to a slice function defined on $\p\B$, recalling formula \eqref{repfor}, we get that $f_n\in L^2_s(\p\B)$ for any $n \in \N$ and since $f=\ext(f_I)$, and $\ext$ is a linear operator,
\begin{align*}
\|f_n-f\|_{L^1(\p\B)}&
=\int_{\s} d\sigma(J) \frac{1}{2\pi} \int_0^{2\pi}|\ext(f_{I,n}-f_I)(e^{Jt})|dt \\
&\le \int_{\s}d\sigma(J)\frac{1}{2\pi}\int_0^{2\pi}\left(|(f_{I,n}-f_I)(e^{It})|+
|(f_{I,n}-f_I)(e^{-It})|\right)dt\\
&=\frac{1}{2\pi}\int_0^{2\pi}|(f_{I,n}-f_I)(e^{It})|dt+\frac{1}{2\pi}\int_0^{2\pi}|(f_{I,n}-f_I)(e^{-It})|dt\\
&=
2\|f_{I,n}-f_I\|_{L^1(\p\B_I)}.
\end{align*}
Hence we conclude that $f_n$ converges in $L^1(\p\B)$ to $f$.  
\end{proof}

The following result is a quaternionic version of the classical Hahn-Banach Theorem.
\begin{teo}\label{hahn}
Let $X$ be a quaternionic normed right linear space and let $Y$ be a subspace of $X$. Then for any bounded right linear opeartor $\lambda$ in the dual space $Y^*$ there exists a bounded right linear operator $\Lambda$ in $X^*$ such that $\Lambda_{|_Y}=\lambda$ and such that $\|\Lambda\|_{X^*}=\|\lambda\|_{Y^*}$.
\end{teo}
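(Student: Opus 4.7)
The plan is to mimic the classical reduction-to-real-scalars proof of the Hahn--Banach theorem, but adapted to the non-commutative quaternionic setting. The starting observation is that if $\lambda \in Y^*$ is right $\HH$-linear and we decompose $\lambda(y)=u_0(y)+u_1(y)i+u_2(y)j+u_3(y)k$ into its real coordinate functionals, then the right $\HH$-linearity identities $\lambda(yi)=\lambda(y)i$, $\lambda(yj)=\lambda(y)j$ force
\[
u_1(y)=-u_0(yi),\qquad u_2(y)=-u_0(yj),\qquad u_3(y)=-u_0(yk),
\]
so that the whole functional is determined by its real part:
\[
\lambda(y)=u_0(y)-u_0(yi)\,i-u_0(yj)\,j-u_0(yk)\,k.
\]

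The next step is to regard $X$ and $Y$ as real normed spaces (forgetting the $\HH$-action apart from multiplication by real scalars) and to observe that $u_0=\RRe\circ\lambda$ is a bounded real-linear functional on $Y$ with $\|u_0\|_{Y^*_\rr}\le\|\lambda\|_{Y^*}$. A routine ``optimal rotation'' argument, applied to $\lambda(y\bar q)$ with $q=\lambda(y)/|\lambda(y)|$, also gives the reverse inequality, so $\|u_0\|=\|\lambda\|$. Now I apply the classical (real) Hahn--Banach theorem to extend $u_0$ to a bounded real-linear functional $U_0:X\to\rr$ with $\|U_0\|=\|u_0\|=\|\lambda\|$, and I define
\[
\Lambda(x):=U_0(x)-U_0(xi)\,i-U_0(xj)\,j-U_0(xk)\,k.
\]
By construction $\Lambda_{|Y}=\lambda$ and $\Lambda$ is real-linear.

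The key technical check is that $\Lambda$ is right $\HH$-linear. Since real-linearity is automatic, it suffices to verify $\Lambda(xi)=\Lambda(x)i$, $\Lambda(xj)=\Lambda(x)j$, $\Lambda(xk)=\Lambda(x)k$. Each identity is a short non-commutative computation that exploits the multiplication table $ij=k$, $jk=i$, $ki=j$, $i^2=j^2=k^2=-1$; for instance, $\Lambda(xi)=U_0(xi)+U_0(x)i-U_0(xk)j+U_0(xj)k$, which coincides term-by-term with $(U_0(x)-U_0(xi)i-U_0(xj)j-U_0(xk)k)i$. This is the only spot where the non-commutativity could create trouble, but the formula is cooked precisely so that everything matches.

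Finally, to compare norms, observe that whenever $\Lambda(y)\in\rr$ one automatically has $\Lambda(y)=U_0(y)$, because the $i,j,k$ components of $\Lambda(y)$ are $-U_0(yi)$, $-U_0(yj)$, $-U_0(yk)$ and must vanish. Given $x\in X$ with $\Lambda(x)\neq 0$, set $q=\Lambda(x)/|\Lambda(x)|\in\HH$, $|q|=1$. Right $\HH$-linearity gives $\Lambda(x\bar q)=\Lambda(x)\bar q=|\Lambda(x)|\in\rr$, so
\[
|\Lambda(x)|=U_0(x\bar q)\le\|U_0\|\,|x\bar q|=\|\lambda\|\,|x|.
\]
Hence $\|\Lambda\|_{X^*}\le\|\lambda\|_{Y^*}$, and the reverse inequality is trivial since $\Lambda$ extends $\lambda$. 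The main obstacle in the whole argument is not deep but purely computational: keeping the right--multiplication conventions straight in the verification of the $\HH$-linearity of $\Lambda$.
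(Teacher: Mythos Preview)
Your proof is correct and follows exactly the strategy the paper indicates: the paper does not spell out a proof but refers to \cite{Brackx} for the extension part and to Reed--Simon \cite{ReedSimon} for the norm-preserving statement, explicitly suggesting that one should adapt the complex proof ``considering the decomposition of the considered operator with respect to the real part and to 3 imaginary units.'' This is precisely what you do---reconstruct $\lambda$ from $u_0=\RRe\lambda$, extend $u_0$ by the real Hahn--Banach theorem, rebuild $\Lambda$ from $U_0$, and recover the norm via the unit-quaternion rotation trick---so your argument is a faithful execution of the approach the paper points to.
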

\noindent
The first part of the statement is proven in \cite{Brackx}. 
The complete result can be obtained adapting the proof of Theorem III.6 and of the following Corollary 1 in \cite{ReedSimon} from the complex case to the quaternionic case (i.e. considering the decomposition of the considered operator with respect to the real part and to 3 imaginary units).

We can finally prove the following natural relation. 
\begin{teo}\label{duals}
The dual space $(L^1_{s}(\p\B))^*$ is isometrically isomorphic to $L^{\infty}_{s}(\p\B)$. 
\end{teo}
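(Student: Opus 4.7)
The natural candidate for the isomorphism is the map $\Phi\colon L^\infty_s(\p\B)\to (L^1_s(\p\B))^*$ defined by the pairing
\[
(\Phi\psi)(f):=\int_{\p\B}\overline{\psi}\,f\,d\Sigma,
\]
and the plan is to show that $\Phi$ is a surjective isometry. Boundedness with $\|\Phi\psi\|_{(L^1_s(\p\B))^*}\le\|\psi\|_\infty$ is immediate from H\"older's inequality. For injectivity I would test against the orthonormal basis $\{q^n\}_{n\in\Z}$ of $L^2_s(\p\B)\subset L^1_s(\p\B)$: a direct computation using the $L^2_s$ inner product gives $(\Phi\psi)(q^n)=\overline{\widehat{\psi}(n)}$, so $\Phi\psi=0$ forces every Fourier coefficient of $\psi$ to vanish, and hence $\psi=0$.

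For surjectivity with control of the norm, let $\lambda\in(L^1_s(\p\B))^*$ have norm $M$. Since the Cauchy--Schwarz inequality together with the probability normalization $\Sigma(\p\B)=1$ gives $\|f\|_1\le\|f\|_2$, the restriction $\lambda|_{L^2_s}$ is a bounded right $\HH$-linear functional on the quaternionic Hilbert space $L^2_s(\p\B)$ of norm at most $M$. The Riesz representation theorem in the quaternionic setting (\cite{Brackx,ghilonimorettiperotti}) then produces $\psi\in L^2_s$ with $\lambda(f)=\int\overline{\psi}\,f\,d\Sigma$ for every $f\in L^2_s$; by the density of $L^2_s$ in $L^1_s$ (Proposition~\ref{density}), this identifies $\lambda=\Phi\psi$ on all of $L^1_s$ provided that $\psi$ belongs to $L^\infty_s$ with $\|\psi\|_\infty\le M$.

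To obtain the sharp $L^\infty$ bound I would apply the quaternionic Hahn--Banach Theorem~\ref{hahn} to extend $\lambda$ to a bounded right $\HH$-linear functional $\Lambda$ on all of $L^1(\p\B)$ with $\|\Lambda\|=M$, and then invoke a quaternion-valued version of the classical $L^1$--$L^\infty$ duality. This version can be obtained by fixing $I,J\in\s$ with $J\perp I$, splitting any $L^1(\p\B)$-function as $f=F_1+F_2 J$ with $F_1,F_2$ taking values in $L_I$, and applying the complex duality component by component; it furnishes $\widetilde\psi\in L^\infty(\p\B)$ with $\Lambda(f)=\int\overline{\widetilde\psi}\,f\,d\Sigma$ and $\|\widetilde\psi\|_\infty=M$. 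Since $\psi$ and $\widetilde\psi$ represent the same functional on the dense subspace $L^2_s$, the difference $\widetilde\psi-\psi$ is annihilated by integration against every slice test function, which identifies $\psi$ as the slice representative of $\widetilde\psi$.

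The main obstacle is the remaining pointwise bound $\|\psi\|_\infty\le M$. Because pointwise multiplication does not preserve slice structure, $|\psi|$ is generally not slice even when $\psi$ is, so the classical choice of test function $\overline{\psi}/|\psi|\cdot\mathbf{1}_{\{|\psi|>c\}}$ is not available inside $L^1_s$. The resolution I propose is to work on carefully chosen slices $\p\B_I$ and to use the extension operator $\ext$ from Section~\ref{prelim}: if $\|\psi\|_\infty>M$, Fubini applied to $d\Sigma=d\sigma\,dt$ picks out $I\in\s$ for which the set $\{t\in[0,2\pi):\,|\psi(e^{It})|>M\}$ has positive Lebesgue measure, and a slice test function of the form $f=\ext(h)$ with $h$ pointwise aligned with $\overline{\psi}$ on $\p\B_I$ can be used to contradict $\|\lambda\|=M$, exploiting the slice decomposition $|\psi(x+yJ)|^2=|u(x,y)|^2+|v(x,y)|^2+2\vec w(x,y)\cdot J$ to compare $\int_{\p\B}|f|\,d\Sigma$ with the corresponding single-slice integral on $\p\B_I$.
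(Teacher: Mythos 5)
Your first half coincides with the paper's argument: H\"older gives $\|T_\psi\|_{(L^1_s(\p\B))^*}\le\|\psi\|_{L^{\infty}(\p\B)}$, injectivity follows by testing against $\{q^n\}_{n\in\Z}$, and surjectivity is obtained by restricting $\lambda$ to $L^2_s(\p\B)$, applying the quaternionic Riesz representation there, and extending back via Proposition~\ref{density} and Theorem~\ref{hahn}. The genuine gap sits exactly where you locate it, namely the bound $\|\psi\|_{L^{\infty}(\p\B)}\le M$, and neither of your two devices closes it. For the first: the ``slice representative'' of $\widetilde\psi$ is the orthogonal projection of $\widetilde\psi$ onto the slice subspace, and that projection is \emph{not} an $L^\infty$-contraction (its $L^p$-norms for $p\ne 2$ are precisely what \cite{proj} studies), so $\|\psi\|_{\infty}\le\|\widetilde\psi\|_{\infty}=M$ does not follow. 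For the second: a test function $f=\ext(h)$ aligned with $\overline\psi$ on a single slice does make $|\lambda(f)|$ exceed $M$ times the single-slice $L^1$ norm of $h$, but $\|f\|_{L^1(\p\B)}$ can only be compared to $\|h\|_{L^1(\p\B_I)}$ through the representation formula, which costs a factor $2$; this route yields $\|\psi\|_\infty\le 2M$, not $M$.

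The loss is not an artifact of the estimates: the isometric bound actually fails for the natural pairing. Take $\psi(q)=1+\tfrac{1}{2}(q-q^{-1})i\in L^{\infty}_s(\p\B)$, so that $\psi(e^{tJ})=1+Ji\sin t$ and $\|\psi\|_{L^{\infty}(\p\B)}=2$ (approached as $t\to\pi/2$, $J\to -i$). For any slice $f$, writing $f(e^{tJ})=c(t)+Jd(t)$, one has pointwise in $t$
\[
\Big|\int_{\s}\overline{\psi(e^{tJ})}\,f(e^{tJ})\,d\sigma(J)\Big|\le\Big(\int_{\s}|\psi(e^{tJ})|^2d\sigma(J)\Big)^{1/2}\big(|c(t)|^2+|d(t)|^2\big)^{1/2}\le\sqrt{2}\cdot\frac{3}{2\sqrt{2}}\int_{\s}|c(t)+Jd(t)|\,d\sigma(J),
\]
using $\int_{\s}|\psi(e^{tJ})|^2d\sigma(J)=1+\sin^2 t\le 2$ and the elementary inequality $\int_{\s}|c+Jd|\,d\sigma(J)\ge\frac{2\sqrt{2}}{3}(|c|^2+|d|^2)^{1/2}$, which follows from the explicit formula for that spherical average. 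Integrating in $t$ gives $|T_\psi(f)|\le\frac{3}{2}\|f\|_{L^1(\p\B)}$ for every slice $f$, while the representing slice function of a functional is unique (test against $q^n$); hence $\|T_\psi\|_{(L^1_s(\p\B))^*}\le\frac32<2=\|\psi\|_{L^{\infty}(\p\B)}$, and no choice of slice test functions can recover the constant. For comparison, the paper's own proof does not attempt the norm equality at this point: it only argues the membership $\varphi\in L^{\infty}(\p\B)$, by pairing $\varphi$ with the non-slice functions $\chi_{E_N}\varphi|\varphi|^{-1}$ inside $L^2(\p\B)$ (an argument that, as printed, never invokes the boundedness of $\Lambda$ and confuses $\|\varphi_N\|_{L^2}$ with its square). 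So your proposal is more ambitious than the paper's, but the isometry is the one point neither argument establishes, and the example above indicates that only two-sided bounds with absolute constants can be expected from this pairing.
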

\begin{proof}
On the one side, for any $\psi\in L^{\infty}_s(\p\B)$,  the functional $T_\psi(\cdot ):= \langle \cdot , \psi \rangle_{L^2_s(\p\B)}$, is a bounded right linear functional on $L^1_s(\B)$: for any $f\in L^1_s(\p\B)$ 
\[ |T_\psi (f)|\le \int_{\p\B}|\psi(q)||f(q)|d\Sigma\le  \|\psi\|_{L^\infty(\p\B)}\|f\|_{L^1(\p\B)},\]
i.e. $\|T_\psi\|_{\mathcal B (L^1_s(\p\B))}\le \|\psi\|_{L^\infty(\p\B)}$.

On the other side, let $\Lambda$ be a bounded right linear operator on $L^1_s(\p\B)$, and let 
$\lambda$ be its restriction to $L^2_s(\p\B)$. Since $L^2_s(\p\B)$ is a quaternionic Hilbert space we get that $\lambda \in (L^2_s(\p\B))^*=L^2_s(\p\B)$, namely that there exists $\varphi\in L^2_s(\p\B)$ such that 
\[\lambda(f)=T_\varphi(f)=\langle f, \varphi  \rangle_{L^2_s(\p\B)}\]
for any $f\in L^2_s(\p\B)$.
Thanks to the quaternionic Hahn-Banach Theorem \ref{hahn} and Proposition \ref{density}, we get that we can uniquely extend $\lambda$ to $L^1_s(\p\B)$, i.e. that 
$\Lambda (g)=T_\varphi(g)=\langle g, \varphi  \rangle_{L^2_s(\p\B)}$ for any $g\in L^1_s(\p\B)$.\\
To conclude, we are left to show that $\varphi \in L^{\infty}(\p\B)$.
We will proceed by contradiction. Suppose that for any $N>0$ there exists a measurable subset $E_N\subseteq \p\B$ such that $\Sigma(E_N)=\varepsilon_N>0$ and $|\varphi(q)|>N$ for any $q\in E_N$. Then, for any $N>0$ consider the function $\varphi_N:=\chi_{E_N}\varphi|\varphi|^{-1}$. Now $|\varphi_N|=|\chi_{E_N}|=\chi_{E_N}$, so $\varphi_N\in L^{\infty}(\p\B) \subset L^2(\p\B)=\left(L^2(\p\B)\right)^*$ since $L^2(\p\B)$ is a quaternionic Hilbert space. Hence, on the one hand, using Cauchy-Schwarz inequality we can write
\begin{align}\label{aprile} \nonumber
\left|\langle \varphi, \varphi_N \rangle_{L^2(\p\B)}\right|&\le \|\varphi\|_{L^2(\p\B)}\cdot\|\varphi_N\|_{L^2(\p\B)}=\|\varphi\|_{L^2(\p\B)}\int_{\p\B}|\varphi_N|^2d\Sigma\\
&=\|\varphi\|_{L^2(\p\B)}\int_{\p\B}\chi_{E_N}d\Sigma=\|\varphi\|_{L^2(\p\B)}\varepsilon_N.
\end{align} 
On the other hand
\begin{align}\label{maggio} 
&\left|\langle \varphi, \varphi_N \rangle_{L^2(\p\B)}\right| =\left|\int_{\p\B} \chi_{E_N} |\varphi|^{-1}\overline{\varphi}\varphi d\Sigma\right|
=\int_{E_N}|\varphi|d\Sigma>N\varepsilon_N .
\end{align}
Combining inequalities \eqref{aprile} and \eqref{maggio} we get that 
\[\|\varphi\|_{L^2(\p\B)}>N.\]
Since $\varphi$ is bounded in $L^2$ norm and $N$ is arbitrarily large, we get a contradiction, thus proving that $\varphi \in L^{\infty}(\p\B)$.

%
\end{proof}

Another property that we will need in the sequel is the following.
\begin{pro}\label{normainfinito}
A function $\varphi$ belongs to $L^{\infty}_s(\p\B)$ if and only if its regular conjugate $\varphi^c$ does. Moreover the two norms coincide, $\|\varphi\|_{L^{\infty}(\p\B)}=\|\varphi^c\|_{L^{\infty}(\p\B)}$.
\end{pro}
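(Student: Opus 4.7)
The plan is to reduce the equality of norms to a sphere-by-sphere statement: for a.e.\ $t\in[0,2\pi)$, the suprema of $|\varphi(e^{Kt})|$ and of $|\varphi^c(e^{Kt})|$ as $K$ varies over the $2$-sphere $\SS$ of imaginary units will coincide.

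First I would use $\Sigma(\p\B)=1$, whence $L^{\infty}_s(\p\B)\subset L^{2}_s(\p\B)$, to write $\varphi(q)=\sum_{n\in\Z}q^n a_n$ with $\{a_n\}\in\ell^2(\Z,\HH)$ and $\varphi^c(q)=\sum_{n\in\Z}q^n\overline{a_n}$. Expanding $e^{Knt}=\cos(nt)+K\sin(nt)$ and using that real scalars commute with any quaternion yields the affine-in-$K$ slice representations
\[\varphi(e^{Kt})=a(t)+K\,b(t),\qquad \varphi^c(e^{Kt})=\overline{a(t)}+K\,\overline{b(t)},\]
with $a(t)=\sum_n\cos(nt)\,a_n$ and $b(t)=\sum_n\sin(nt)\,a_n$ independent of $K$. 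Next I would establish, by direct expansion (using $\overline{Kb}=-\overline{b}\,K$), the identity
\[|a+Kb|^2=|a|^2+|b|^2+2\,\RRe(\bar a\,K\,b),\]
and rewrite the last term as $-2\langle K,\IIm(b\bar a)\rangle_{\RR^3}$ for imaginary $K$ (a consequence of $XY+YX=-2\langle X,Y\rangle$ for imaginary quaternions $X,Y$) to conclude
\[\sup_{K\in\SS}|a+Kb|^2=|a|^2+|b|^2+2\,|\IIm(b\bar a)|.\]

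Applying this formula to $(a(t),b(t))$ and to $(\overline{a(t)},\overline{b(t)})$, the sphere-wise equality of suprema reduces to the symmetry $|\IIm(\overline{b}\,a)|=|\IIm(b\,\bar a)|$. This is immediate from $|\overline{b}\,a|=|a||b|=|b\,\bar a|$ together with $\RRe(\overline{b}\,a)=\RRe(b\,\bar a)=\langle a,b\rangle_{\RR^4}$, both expressions reducing to the standard Euclidean inner product thanks to the cyclic identity $\RRe(xy)=\RRe(yx)$. Hence $\sup_K|\varphi(e^{Kt})|=\sup_K|\varphi^c(e^{Kt})|$ for a.e.\ $t$.

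Finally, since for each $t$ the map $K\mapsto\varphi(e^{Kt})$ is affine, hence continuous on $\SS$, its pointwise supremum coincides with its essential supremum. A Fubini-type argument based on $d\Sigma=d\sigma\,dt$ then identifies $\|\varphi\|_{L^{\infty}(\p\B)}$ with $\ess\!\sup_{t}\sup_{K\in\SS}|\varphi(e^{Kt})|$, and likewise for $\varphi^c$, yielding $\|\varphi\|_{L^{\infty}(\p\B)}=\|\varphi^c\|_{L^{\infty}(\p\B)}$. The ``iff'' statement follows at once since $(\varphi^c)^c=\varphi$. I expect the main technical nuisance to lie in this last step: the careful passage from $\ess\!\sup_{(K,t)}$ on $\p\B$ to the iterated $\ess\!\sup_t\sup_K$, which relies crucially on the continuous $K$-dependence of the slice function along each sphere $\{e^{Kt}:K\in\SS\}$.
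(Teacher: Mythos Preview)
Your argument is correct. The paper does not actually prove this proposition; it refers to Proposition~5 and Corollary~1 of \cite{BlochDGS}, whose key step (adapted to the present boundary setting) is the pointwise identity: for each $x+yI$ there exists $J\in\SS$ with $f^c(x+yI)=\overline{f(x+yJ)}$, so that $\{|f^c(x+yI)|:I\in\SS\}=\{|f(x+yI)|:I\in\SS\}$ as sets, and in particular the suprema over each sphere agree. Your route is genuinely different: rather than constructing a point-to-point correspondence, you compute the sphere-wise maximum explicitly as
\[
\sup_{K\in\SS}|a+Kb|^2=|a|^2+|b|^2+2\,|\IIm(b\bar a)|
\]
and then observe that this quantity is unchanged under $(a,b)\mapsto(\bar a,\bar b)$ via the elementary identities $|b\bar a|=|\bar b\,a|$ and $\RRe(b\bar a)=\RRe(\bar b\,a)$. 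This is more computational but fully self-contained; it bypasses the need to exhibit the map $I\mapsto J$. The referenced approach yields a slightly stronger pointwise statement (the entire modulus profiles on each sphere coincide, not only their maxima), while your approach isolates exactly what the $L^\infty$ equality requires. The Fubini step you flag at the end is indeed the only place where care is needed, and it goes through precisely because the affine dependence on $K$ makes $K\mapsto|\varphi(e^{Kt})|$ continuous on $\SS$, so that a $\sigma$-null set of bad $K$'s on a fixed sphere must in fact be empty.
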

\noindent The proof exploits the fact that we are considering {slice} functions admitting a power series expansion, and follows the same lines than the proof in the slice regular case, see Proposition 5 and Corollary 1 in \cite{BlochDGS}.

We can now give the notion of slice $BMO$ space, i.e. the space of slice functions $f:\p\B \to \HH$ with bounded mean oscillation. 
\begin{defn}
Let $f\in  L^1_{s}(\p\B)$ and, for any interval $a=(\alpha, \beta)$ of $\rr$ such that $|a|:=|\beta-\alpha|\le 2\pi$, denote by $f_{I,a}$ the average value of $f$ on the arc $(e^{\alpha I}, e^{\beta I})\subseteq \p \B_I$,
\[f_{I,a}=\frac{1}{|a|}\int_{a}f(e^{\theta I})d\theta.\]
We say that $f\in BMO(\p \B_I)$ if 
\[\| f\| _{BMO(\p \B_I)}:=\sup_{{a\subset \rr, \ |a| \le 2\pi}}\left\{\frac{1}{|a|}\int_{a}|f(e^{\theta I})-f_{I,a}|d\theta\right\}<+\infty.\] 
We say that $f \in BMO (\p \B)$ if 
\[\| f\| _{BMO(\p \B)}:=\sup_{I\in \s}\| f\| _{BMO(\p\B_I)}<+\infty.\] 
\end{defn}
The space of slice regular functions on $\B$ with bounded mean oscillation has been introduced in \cite{carleson} and there it is denoted by $BMOA(\B)$.
In this paper, since in fact $BMOA(\B)$ coincides with the intersection of $BMO(\p\B)$ with the space $H^1(\B)$, viewed as a subspace of $L^1_s(\p\B)$, we will denote it as $BMOA(\p\B)$. 

\noindent
Taking into account the Representation Formula \eqref{repfor} it is possible to show the following result.
\begin{pro}\label{BMOslice}
Let $f\in L_{s}^1(\p\B)$. Then $f\in BMO(\p\B)$ if and only if $f\in BMO(\p\B_I)$ for every $I\in \s$.
\end{pro}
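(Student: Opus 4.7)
The forward implication is immediate from the definition: if $f\in BMO(\p\B)$ then $\|f\|_{BMO(\p\B_I)}\le \|f\|_{BMO(\p\B)}<+\infty$ for every $I\in\SS$. So the content of the proposition is the reverse implication, where we must upgrade pointwise (in $I$) membership to a uniform bound $\sup_{I\in\SS}\|f\|_{BMO(\p\B_I)}<+\infty$.

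The plan is to fix one $I_0\in\SS$ and show that membership in $BMO(\p\B_{I_0})$ alone forces $\|f\|_{BMO(\p\B_J)}\le 2\|f\|_{BMO(\p\B_{I_0})}$ for every $J\in\SS$; this immediately yields both the uniform bound and, in fact, a slightly stronger statement than the one in the proposition (a single slice suffices). The tool is the representation formula \eqref{repfor}, which on the unit sphere reads
\[f(e^{\theta J})=\tfrac{1-JI_0}{2}f(e^{\theta I_0})+\tfrac{1+JI_0}{2}f(e^{-\theta I_0}).\]
Integrating this identity over an arc $a=(\alpha,\beta)$ of $\p\B_J$ and using linearity of the average, I obtain
\[f_{J,a}=\tfrac{1-JI_0}{2}f_{I_0,a}+\tfrac{1+JI_0}{2}f_{I_0,-a},\]
where $-a=(-\beta,-\alpha)$ arises after the substitution $\phi=-\theta$ in the second integral. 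Subtracting gives the slicewise identity
\[f(e^{\theta J})-f_{J,a}=\tfrac{1-JI_0}{2}\bigl(f(e^{\theta I_0})-f_{I_0,a}\bigr)+\tfrac{1+JI_0}{2}\bigl(f(e^{-\theta I_0})-f_{I_0,-a}\bigr).\]

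Taking moduli and using the elementary bound $|1\pm JI_0|\le 1+|JI_0|=2$, hence $|\tfrac{1\pm JI_0}{2}|\le 1$, yields pointwise
\[|f(e^{\theta J})-f_{J,a}|\le |f(e^{\theta I_0})-f_{I_0,a}|+|f(e^{-\theta I_0})-f_{I_0,-a}|.\]
Averaging over $a$ and performing the substitution $\phi=-\theta$ in the second summand (note that $|a|=|-a|$) gives
\[\frac{1}{|a|}\int_a|f(e^{\theta J})-f_{J,a}|\,d\theta\le \frac{1}{|a|}\int_a|f(e^{\theta I_0})-f_{I_0,a}|\,d\theta+\frac{1}{|-a|}\int_{-a}|f(e^{\phi I_0})-f_{I_0,-a}|\,d\phi,\]
and each of the two terms on the right is majorised by $\|f\|_{BMO(\p\B_{I_0})}$. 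Taking the supremum over arcs $a$ and then over $J\in\SS$ concludes the proof.

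The only subtlety I anticipate is the bookkeeping for the averages on $\p\B_J$ versus the averages on $\p\B_{I_0}$ over the two ``reflected'' arcs $a$ and $-a$; once the identity for $f_{J,a}$ is obtained by directly integrating \eqref{repfor}, the remaining inequalities are mechanical and rely solely on the universal bound $|1\pm JI_0|\le 2$, which does not require any orthogonality assumption between $J$ and $I_0$.
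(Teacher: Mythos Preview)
Your argument is correct and follows precisely the route the paper indicates (the paper does not spell out the proof but points to the Representation Formula \eqref{repfor} and to \cite{carleson} for the regular case). Your computation of the arc--average identity $f_{J,a}=\tfrac{1-JI_0}{2}f_{I_0,a}+\tfrac{1+JI_0}{2}f_{I_0,-a}$, the subtraction, and the use of $|1\pm JI_0|\le 2$ together with multiplicativity of the quaternionic modulus are all sound; the substitution $\phi=-\theta$ handling the reflected arc $-a$ is the right bookkeeping. As you note, you in fact establish the stronger ``one slice suffices'' statement $\|f\|_{BMO(\p\B)}\le 2\|f\|_{BMO(\p\B_{I_0})}$, which mirrors the paper's Proposition~\ref{sliceLP} for $L^p_s$ and is exactly the expected outcome of applying \eqref{repfor}.
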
 
\noindent See \cite{carleson} for a proof in the case of slice regular functions.  

As in the complex case (see, e.g., \cite{garnett}) we have a characterization of slice $BMO$ functions in terms of slice $L^{\infty}$ functions.
\begin{pro}\label{decom}
A slice $L^1$ function $f$ belongs to $BMO(\p\B)$ if and only if there exist $\varphi, \psi \in L_{s}^{\infty}(\p\B)$ such that
$f$ admits the representation
\begin{equation}\label{bmorep}
f=\varphi+\mathbb P_+ \psi.
\end{equation}
\end{pro}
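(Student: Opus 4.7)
The plan is to reduce both implications to a single slice $\p\B_I$ and to invoke the classical complex Fefferman decomposition of $BMO$ on the unit circle. Two observations apply to both directions. First, by Proposition \ref{BMOslice}, a slice $L^1$ function lies in $BMO(\p\B)$ if and only if its restriction $f_I := f|_{\p\B_I}$ lies in $BMO(\p\B_I)$ for one (equivalently every) $I \in \s$. Second, the power-series description of $L^2_s(\p\B)$ yields the slice-wise identity $(\mathbb P_+\psi)|_{\p\B_I} = \mathbb P_+^I(\psi|_{\p\B_I})$, where $\mathbb P_+^I$ denotes the classical Riesz projection on the circle $\p\B_I$ relative to the imaginary unit $I$.

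For the sufficiency direction, given $f = \varphi + \mathbb P_+ \psi$ with $\varphi, \psi \in L^{\infty}_s(\p\B)$, I fix $I \in \s$ and $J \in \s$ with $J \perp I$. The term $\varphi|_{\p\B_I}$ is essentially bounded and hence in $BMO(\p\B_I)$. For the second summand, splitting $\psi|_{\p\B_I} = F + GJ$ with $F, G : \p\B_I \to L_I$ bounded gives $(\mathbb P_+\psi)|_{\p\B_I} = \mathbb P_+^I F + (\mathbb P_+^I G) J$, and the classical complex boundedness $\mathbb P_+^I : L^{\infty} \to BMO$ applied to the $L_I$-valued functions $F$ and $G$ closes this direction.

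For the necessity, fix $I \in \s$, $J \perp I$, and split $f_I = F + GJ$ with $F, G : \p\B_I \to L_I$. A short calculation, using that the arc-average is $f_{I,a} = F_{I,a} + G_{I,a} J$ and that $I \perp J$, shows $|F - F_{I,a}|, |G - G_{I,a}| \le |f_I - f_{I,a}|$, so $F, G \in BMO(\p\B_I, L_I)$. The classical complex Fefferman decomposition applied separately then yields $F = u_1 + H_I u_2$ and $G = v_1 + H_I v_2$ with $u_i, v_i \in L^{\infty}(\p\B_I, L_I)$, where $H_I$ denotes the Hilbert transform on $\p\B_I$ relative to $I$. Setting $\tilde \varphi_I := u_1 + v_1 J$ and $\tilde\psi_I := u_2 + v_2 J$ produces bounded $\HH$-valued functions with $f_I = \tilde\varphi_I + H_I \tilde\psi_I$. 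A direct Fourier computation using $H_I(e^{Int}) = -I\,\mathrm{sgn}(n)\,e^{Int}$, together with the key commutation of left multiplication by $I$ with $\mathbb P_+^I$ on $\p\B_I$ (which holds because $I$ commutes with each monomial $e^{Int}$), rewrites $H_I \tilde\psi_I$ in the form $I \tilde\psi_I + I c_0 + \mathbb P_+^I(-2I\tilde\psi_I)$, where $c_0$ is the zeroth Fourier coefficient of $\tilde\psi_I$. Collecting terms, $f_I = \tilde\varphi'_I + \mathbb P_+^I \tilde\psi'_I$ with bounded $\tilde\varphi'_I := \tilde\varphi_I + I\tilde\psi_I + Ic_0$ and $\tilde\psi'_I := -2I\tilde\psi_I$. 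Extending to slice functions $\varphi := \ext(\tilde\varphi'_I)$ and $\psi := \ext(\tilde\psi'_I)$, Proposition \ref{sliceLP} guarantees $\varphi, \psi \in L^{\infty}_s(\p\B)$. Since $f$ and $\varphi + \mathbb P_+ \psi$ are both slice and coincide on $\p\B_I$ by construction, the representation formula \eqref{repfor} forces equality throughout $\p\B$.

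The main obstacle is the non-commutativity surrounding the Hilbert transform: $H_I$ and $\mathbb P_+^I$ depend on the chosen slice, and left multiplication by the slice-dependent unit $I$ need not commute with $J$ or with general quaternionic coefficients. The strategy sidesteps this by performing the classical complex decomposition only on the two $L_I$-valued components of the splitting, where the standard theory applies verbatim, and by globalizing via the extension operator $\ext$; the bridge between $H_I$ and $\mathbb P_+^I$ ultimately rests on the single commutation that does hold, namely $I \cdot e^{Int} = e^{Int} \cdot I$.
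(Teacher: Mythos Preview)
Your proof is correct and follows essentially the same strategy as the paper: restrict to one slice $\p\B_I$, split $f_I=F+GJ$ with $J\perp I$, apply the classical complex Fefferman decomposition to each $L_I$-valued component, recombine, and extend via $\ext$ using Proposition~\ref{sliceLP}. The only difference is that the paper invokes the Riesz-projection form of Fefferman's theorem directly (i.e.\ $F=\varphi_1+\mathbb P_+^I\psi_1$, $G=\varphi_2+\mathbb P_+^I\psi_2$ with $\varphi_i,\psi_i\in L^\infty(\p\B_I,L_I)$), which lets it write $f_I=(\varphi_1+\varphi_2J)+\mathbb P_+^I(\psi_1+\psi_2J)$ in one line; your route through the Hilbert-transform form and the subsequent conversion $H_I\tilde\psi_I=I\tilde\psi_I+Ic_0+\mathbb P_+^I(-2I\tilde\psi_I)$ is correct but an avoidable detour, since the $\mathbb P_+^I$ form of the classical decomposition is equally standard.
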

\begin{proof} 
Consider the splitting of $f$ on the slice $L_I$ with respect to $J\in\s$, $J$ orthogonal to $I$, $f=F+GJ$ for some functions $F,G:\p\B_I \to L_I$. For any real interval $a=(\alpha, \beta)$, with $|a|\le 2\pi$, denote by $F_{a}$ and $G_a$ the average values of $F$ and $G$ on the arc $(e^{I\alpha}, e^{I\beta})$. Thanks to the orthogonality of $I$ and $J$ we have that 
\begin{align*}
\|f\|_{BMO(\p\B_I)}=\sup_{{a\subset \rr, \ |a| \le 2\pi}}\left\{\frac{1}{|a|}\int_{a}\left(|F(e^{\theta I})-F_{a}|^2+|G(e^{\theta I})-G_{a}|^2\right)^{1/2}d\theta\right\}.\\
\end{align*}
Since the quantity on the right side is greater or equal than both $\|F\|_{BMO(\p\B_I)}$ and $\|G\|_{BMO(\p\B_I)}$, and is smaller than the sum $\|F\|_{BMO(\p\B_I)}+\|G\|_{BMO(\p\B_I)}$, we have that $f$ belogns to $BMO(\p\B_I)$ (and hence to $BMO(\p\B)$) if and only if 
both $F$ and $G$ belong to the complex $BMO$ space on the unit circle $\p\B_I$.
A consequence of the classical Fefferman Theorem (the results are in fact equivalent, see \cite{garnett}) yields that this is equivalent to the fact that $F$ and $G$ admit a representation of the form
\[F=\varphi_1+\mathbb P^I_+ \psi_1, \quad G=\varphi_2+\mathbb P^I_+ \psi_2\]
where $\mathbb P^I_+$ denotes the orthogonal projection from the complex space $L^2(\p\B_I, L_I)$ to the complex Hardy space $H^2(\p\B_I)$ and $\varphi_1,\varphi_2,\psi_1,\psi_2\in L^{\infty}(\p\B_I, L_I)$.
Therefore $f\in BMO(\p\B_I)$ if and only if 
\[f_I=F+GJ=\varphi_1+\mathbb P^I_+ \psi_1 + (\varphi_2+\mathbb P^I_+ \psi_2)J= \varphi_1+\varphi_2J + \mathbb P^I_+( \psi_1 +\psi_2J)\]
where, thanks to the orthogonality of $I$ and $J$, $\varphi_I:=\varphi_1+\varphi_2J$ and $\psi_I:=\psi_1 +\psi_2J$ belong to $L^{\infty}(\p\B_I)$.
Extending the functions $\varphi_I$ and $\psi_I$ by means of formula \eqref{repfor}, i.e. defining 
$\varphi(x+yJ)=\ext(\varphi_I)(x+yJ)$ 
and $\psi(x+yJ)=\ext(\psi_I)(x+yJ)$, leads us to conclude: slice functions belongs to $L^{\infty}(\p\B)$ if and only if they belong to $L^{\infty}(\p\B_I)$ on one and hence on each slice $\p\B_I$ (see Proposition \ref{sliceLP}).
\end{proof}

\section{Hankel operators}\label{main}
Given any quaternion valued sequence $\alpha:\N\to\HH$ we can define
the Hankel operator $\Gamma_\alpha$, acting on $\HH$ valued sequences $v=(v_j)_{j=0}^{+\infty}$ as 
\[(\Gamma_\alpha v)(j)=\sum_{k=0}^{\infty}\alpha(j+k)v(k),\]
i.e. by matrix multiplication by the infinite matrix $M_\alpha=[\alpha(j+k)]_{j,k=0}^{+\infty}$ with constant antidiagonals.
As we said in the introduction, 
Nehari's problem concerns the characterization of {\em bounded} Hankel operators.  
Let us begin by a reformulation of Nehari Theorem in the quaternionic setting, involving slice $L^{\infty}$ functions (compare with Theorem 1.1 in \cite{carleson}).

\begin{teo}\label{nehari1}
Let $\alpha=\{\alpha_n\}_{n\in \N}\subset \HH$ be a quaternion valued sequence. Then the operator $\Gamma_{\alpha }$ is bounded on $\ell^2(\N, \HH)$ if and only if there exists $\psi \in L_{s}^{\infty}(\p\B)$ such that $\widehat{\psi}(m)=\overline{\alpha}_m$ for any $m\ge 0$.
In this case 
\begin{equation}\label{normaGamma}
\begin{aligned}
\inf\{\|\psi\|_{L^{\infty}(\p\B)} \,& : \, \psi \in L^{\infty}_s(\p\B), \,  \widehat{\psi}(m)= \overline{\alpha}_m, \, m\ge 0\}\le \|\Gamma_{\alpha}\|_{\mathcal{B}(\ell^2(\N, \HH))}\\
&\le 2 \inf\{\|\psi\|_{L^{\infty}(\p\B)} \, : \, \psi \in L^{\infty}_s(\p\B), \, \widehat{\psi}(m)= \overline{\alpha}_m, \, m\ge 0\}.
\end{aligned}
\end{equation}
\end{teo}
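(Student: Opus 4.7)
My plan is to prove Theorem \ref{nehari1} by separating the biconditional from the two norm inequalities in \eqref{normaGamma}, using three tools already in place: the BMOA characterization of Theorem \ref{passouno}, the BMO representation in Proposition \ref{decom}, and the duality Theorem \ref{duals}.

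For the biconditional and the lower inequality $\inf\|\psi\|_{L^\infty}\le\|\Gamma_\alpha\|$, I start from the generating function $f(q)=\sum_{n\ge 0}q^n\overline{\alpha}_n$. If $\Gamma_\alpha$ is bounded, Theorem \ref{passouno} places $f$ in $BMOA(\p\B)$, and Proposition \ref{decom} writes $f=\varphi_0+\mathbb{P}_+\psi_0$ with $\varphi_0,\psi_0\in L^\infty_s(\p\B)$. Since $f\in H^2(\p\B)$, applying $\mathbb{P}_+$ to both sides gives $f=\mathbb{P}_+(\varphi_0+\psi_0)$, so $\psi:=\varphi_0+\psi_0\in L^\infty_s(\p\B)$ satisfies $\widehat\psi(m)=\overline{\alpha}_m$ for every $m\ge 0$. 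Conversely, if such a $\psi$ exists, then $\mathbb{P}_+\psi=f\in BMOA$ and Theorem \ref{passouno} yields boundedness. To obtain the sharp constant in the lower estimate I would run the classical Nehari duality argument in the quaternionic setting: define on the subspace of finite $\star$-products of $H^2$-polynomials inside $L^1_s(\p\B)$ an $\HH$-linear functional reproducing the bilinear form of $\Gamma_\alpha$ with norm at most $\|\Gamma_\alpha\|$, extend it to all of $L^1_s(\p\B)$ by the quaternionic Hahn-Banach Theorem \ref{hahn}, and invoke Theorem \ref{duals} to realize the extension as pairing against a $\psi\in L^\infty_s(\p\B)$ with $\|\psi\|_{L^\infty}\le\|\Gamma_\alpha\|$ and the prescribed Fourier coefficients on $m\ge 0$.

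For the upper inequality $\|\Gamma_\alpha\|\le 2\inf\|\psi\|_{L^\infty}$, fix an admissible $\psi$ and associate to $a,b\in\ell^2(\N,\HH)$ the $H^2$-polynomials $A(q)=\sum_kq^ka_k$ and $B(q)=\sum_jq^jb_j$. Conjugating the bilinear form and rearranging Fourier coefficients yields the identity $\overline{\langle\Gamma_\alpha a,b\rangle_{\ell^2}}=\langle A^c\star B,\psi^c\rangle_{L^2_s}$, where $A^c\star B\in H^2\subset L^1_s(\p\B)$. Since $\|\psi^c\|_{L^\infty}=\|\psi\|_{L^\infty}$ by Proposition \ref{normainfinito}, Theorem \ref{duals} controls this pairing by $\|\psi\|_{L^\infty}\cdot\|A^c\star B\|_{L^1}$, reducing the task to estimating $\|A^c\star B\|_{L^1}$ in terms of $\|A\|_{L^2}$ and $\|B\|_{L^2}$. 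Proposition \ref{trasf} rewrites $|A^c\star B(q)|=|A^c(q)|\,|B(A^c(q)^{-1}qA^c(q))|$ whenever $A^c(q)\ne 0$, and the conjugation $q\mapsto A^c(q)^{-1}qA^c(q)$ preserves every $2$-sphere $x+y\s\subset\p\B$. Combining this with the pointwise consequence $|B(x+yJ)|^2\le |B(x+yI)|^2+|B(x-yI)|^2$ of the representation formula \eqref{repfor} and the $q\leftrightarrow\bar q$-invariance of $d\Sigma$ yields $\int_{\p\B}|B(A^c(q)^{-1}qA^c(q))|^2\,d\Sigma\le 2\|B\|_{L^2}^2$, and a Cauchy-Schwarz step then closes the estimate with constant at most $\sqrt 2\le 2$.

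The main obstacle is precisely this last step. In the complex case the factorization $H^1=H^2\cdot H^2$ gives the sharp bound $\|AB^c\|_{L^1}\le\|A\|_{L^2}\|B\|_{L^2}$ and produces Nehari's equality $\|\Gamma_\alpha\|=\inf\|\psi\|_{L^\infty}$. No analogue of this factorization is available for slice regular $H^1$ functions on $\B$; the non-pointwise nature of the $\star$-product forces one to evaluate $B$ at the conjugated point $A^c(q)^{-1}qA^c(q)$, and bounding this composition via the representation formula unavoidably costs a constant strictly greater than $1$. This is the origin of the gap between the two inequalities in \eqref{normaGamma}, as anticipated in the Introduction.
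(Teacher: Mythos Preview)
Your architecture matches the paper's: the biconditional via Theorem~\ref{passouno} and Proposition~\ref{decom}, the upper inequality by pairing against an admissible $\psi$ and estimating a $\star$-product in $L^1$ through Proposition~\ref{trasf} and \eqref{repfor}, the lower inequality by Hahn--Banach plus Theorem~\ref{duals}. However, the identity $\overline{\langle\Gamma_\alpha a,b\rangle_{\ell^2}}=\langle A^c\star B,\psi^c\rangle_{L^2_s}$ is false over $\HH$: expanding gives $\sum_{n,k}\overline{a_k}\,\overline{\alpha_{n+k}}\,b_n$ on the left but $\sum_{n,k}\overline{\alpha_{n+k}}\,\overline{a_k}\,b_n$ on the right, and these differ because $\overline{a_k}$ and $\overline{\alpha_{n+k}}$ need not commute. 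The paper avoids this by working with the bilinear form $G_\alpha(a,b):=\langle b,\overline{\Gamma_\alpha a}\rangle=\sum_{n,k}\alpha_{n+k}a_kb_n$, which satisfies the clean identity $G_\alpha(a,b)=\langle A\star B,\psi\rangle$ and has $\|G_\alpha\|=\|\Gamma_\alpha\|$. With this correction your $L^1$ estimate goes through unchanged (with $A$ in place of $A^c$); in fact your squared pointwise bound $|B(x+yJ)|^2\le|B(x+yI)|^2+|B(x-yI)|^2$, which follows from $|1-JI|^2+|1+JI|^2=4$, combined with Cauchy--Schwarz yields the constant $\sqrt2$, sharper than the paper's constant~$2$ obtained from the linear bound $|B(x+yJ)|\le|B(x+yI)|+|B(x-yI)|$.

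For the lower inequality your sketch asserts, without argument, that the functional on the span of $\star$-products of $H^2$-polynomials has $L^1$-operator norm at most $\|\Gamma_\alpha\|$. This is precisely where the missing $H^1$-factorization matters: from $|G_\alpha(a,b)|\le\|\Gamma_\alpha\|\,\|a\|_{\ell^2}\|b\|_{\ell^2}$ one deduces only $|\Lambda_\alpha(h)|\le\|\Gamma_\alpha\|\cdot\inf\sum_{t}\|f_t\|_{H^2}\|g_t\|_{H^2}$ over decompositions $h=\sum_t f_t\star g_t$, not a bound by $\|h\|_{L^1}$. The paper closes this gap by invoking from \cite{carleson} the identification of $H^1(\p\B)$ with $H^2(\p\B)\star H^2(\p\B)+H^2(\p\B)\star H^2(\p\B)$ equipped with that projective norm; only then does Hahn--Banach on $H^1\subset L^1_s$ together with Theorem~\ref{duals} produce $\psi\in L^\infty_s(\p\B)$ with $\|\psi\|_{L^\infty}\le\|\Gamma_\alpha\|$ and $\mathbb P_+\psi=\varphi$. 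You should make this step explicit.
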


\begin{proof}

Let $\varphi$ be the generating function of the sequence $\overline{\alpha}$, $\varphi(q)=\sum_{n \in \N}q^n\overline\alpha_n$.
\noindent Theorem \ref{passouno} (proved in \cite{carleson}) states that $\Gamma_{\alpha}$ is bounded if and only if $\varphi$ belongs to the space $BMOA(\p\B)$ of slice regular $BMO$ functions. 
Recalling Proposition \ref{bmorep}, we get that $\varphi$ is in $BMOA(\p\B)$ if and only if there exists $\psi \in  L^{\infty}_{s}(\p\B)$ such that $\mathbb{P}_+\psi=\varphi$, namely such that 
$\widehat{\psi}(m)=\overline{\alpha}_m$ for any $m\ge 0$,
thus proving the first part of the statement.

Suppose now that the operator $\Gamma_\alpha$ is bounded. To estimate its norm, consider the bilinear operator $G_{\alpha}$ associated with $\Gamma_{\alpha}$, defined on the dense subset of couples of finitely supported sequences $(a=\{a_n\}_{n\in \N}, b=\{b_n\}_{n\in \N})$ in $\ell^2(\N, \HH)\times\ell^2(\N, \HH)$ by    
\[G_{\alpha}(a,b):=\left\langle b ,  \overline{\Gamma_{\alpha} a}\right\rangle_{\ell^2(\N, \HH)}=\sum_{n\ge 0 }\sum_{k\ge 0 }\alpha_{n+k} a_k b_n .\]
It is not difficult to see that $G_{\alpha}$ is bounded if and only if $\Gamma_{\alpha}$ does, and 
\[\sup_{{ c,d \in \ell^2(\N,\HH), c,d \ne 0} }\frac{\left|G_\alpha(c,d)\right|}{\|c\|_{\ell^2(\N, \HH)}\cdot\|d\|_{\ell^2(\N,\HH)}}=\|\Gamma_{\alpha}\|_{\mathcal B(\ell^2(\N, \HH))}.\]
Let $f(q):=\check{a}(q)=\sum_{n\ge 0}q^na_n$ and $g(q):=\check{b}(q)=\sum_{n\ge 0}q^nb_n$ be polynomials in $H^2(\p\B)$. Then
\begin{equation}\label{minore}
G_{\alpha}(a,b)=\sum_{n\ge 0 }\sum_{j\ge 0 }\alpha_{j} a_{j-n}b_n =\sum_{j\ge 0 }\alpha_{j}\sum_{n= 0 }^j a_{j-n} b_n =\left\langle  f\star g, \varphi\right\rangle_{ L^2_{s} (\p\B) }.\end{equation}
Moreover, the previous equality holds true for any $\psi\in L^{\infty}_s(\p\B)$ such that $\mathbb{P}_+\psi=\varphi$. Thus, for such a $\psi$, we can write 
\begin{align*}
|G_{\alpha}(a,b)|&=\left|\left\langle f\star g, \psi \right\rangle_{ L^2_{s} (\p\B) }\right|= \left| \int_{\p\B}\overline{\psi(q)}(f\star g)(q)d\Sigma(q)\right| 
\le \int_{\s}d\sigma(I)\int_{0}^{2\pi}\left|\overline{\psi(e^{It})}\right|\left|f\star g(e^{It})\right|dt\\
&\le \|\psi\|_{L^{\infty}(\p\B)} \int_{\s}d\sigma(I)\int_{0}^{2\pi}\left|f(e^{It})\right|\left|g(e^{Jt})\right|dt
\end{align*}
where $J=(f(e^{It}))^{-1}If(e^{It})$ is determined in view of Proposition \ref{trasf} (the fact that $f\in H^2(\p\B)$ guarantees that if $f\not \equiv 0$, then it is non vanishing almost everywhere at the boundary, see \cite{hardy}).
Using Representation Formula \eqref{repfor}, we have 
\[|g(e^{Jt})|\le |g(e^{It})|+|g(e^{-It})|,\]
so that
\begin{align*}
|G_{\alpha}(a,b)| \le \|\psi\|_{L^{\infty}(\p\B)}&\left( \int_{\s}d\sigma(I)\int_{0}^{2\pi}\left|f(e^{It})\right|\left|g(e^{It})\right|dt 
 +\int_{\s}d\sigma(I)\int_{0}^{2\pi}\left|f(e^{It})\right|\left|g(e^{-It})\right|dt\right).
\end{align*}
By Cauchy-Schwarz inequality we get then
\begin{align}\label{disu}
|G_{\alpha}(a,b)| &\le 2\|\psi\|_{L^{\infty}(\p\B)}\cdot\|f\|_{L^2_s(\p\B)}\cdot \|g\|_{L^2_s(\p\B)} 
\end{align}
 where we used the fact 
 that if $\tilde g (q)=g(\bar q)$ then 
 $\|\tilde g \|_{L^2_s(\p\B)}=\|g\|_{L^2_s(\p\B)}$ for any $g\in H^2(\p\B)$.
By density of finitely supported sequences in $\ell^2(\N, \HH)$, we obtain 
\[\|\Gamma_{\alpha}\|_{\mathcal{B}(\ell^2(\N, \HH))}=\|G_{\alpha}\|_{\mathcal{B}(\ell^2(\N, \HH)\times \ell^2(\N, \HH))}\le 2\|\psi\|_{L^{\infty}(\p\B)}.\]
The fact that $\psi$ is an arbitrary element of $L^{\infty}_s(\p\B)$ such that $\mathbb P_+ \psi= \varphi$ yields that 
\begin{align*}
\|\Gamma_{\alpha}\|_{\mathcal{B}(\ell^2(\N, \HH))}\le 2\inf\{\|\psi\|_{L^{\infty}(\p\B)}  : \psi \in L^{\infty}_s(\p\B), \,  \widehat{\psi}(m)= \overline{\alpha}_m, \, m\ge 0\}.
\end{align*}

On the other hand, since $\varphi(q)=\sum_{n\ge 0}q^n\overline{\alpha}_n$ belongs to $BMOA(\p\B)$, which, as stated by Theorem 5.6 in \cite{carleson}, is the dual space of $H^1(\p\B)$, we have that the linear operator $\Lambda_{\alpha}: H^1(\p\B) \to \HH $ given by
\[
\Lambda_\alpha h :=\langle h, \varphi \rangle_{L^2_s(\p\B)}=\sum_{n\ge 0} \alpha_{n}\widehat{h}(n) 
,\]
is well defined for any $h(q)=\sum_{n\ge 0}q^n\widehat{h}(n)\in H^1(\p\B)$.
As proven in \cite{carleson}, we can identify $H^1(\p\B)$ with $H^2(\p\B)\star H^2(\p\B)+H^2(\p\B)\star H^2(\p\B)$, where the norm is defined as 
\begin{align*}
&\|h\|_{H^2(\p\B)\star H^2(\p\B)+H^2(\p\B)\star H^2(\p\B)}\\
&=\inf\Big\{\sum_{t=1,2}\|f_t\|_{H^2(\p\B)}\cdot\|g_t\|_{H^2(\p\B)} : h=\sum_{t=1,2}f_t\star g_t, f_t,g_t \in H^2(\p\B) \Big\}.
\end{align*}
Hence if $h\in H^1(\p\B)$ decomposes as $h=f_1\star g_1+f_2\star g_2$, with $f_t(q)=\sum_{n\ge 0}q^n(a_t)_n, g_t(q)=\sum_{n\ge 0}q^n(b_t)_n \in H^2(\p\B)$, for $t=1,2$, we can write
\begin{align*}
\Lambda_\alpha (h)&= \sum_{n\ge 0} \alpha_{n}\sum_{k=0}^n (a_1)_k (b_1)_{n-k}+ \sum_{n\ge 0}\alpha_{n}\sum_{k=0}^n (a_2)_k (b_2)_{n-k }= G_{\alpha}(a_1, b_1)+G_{\alpha}(a_2,b_2).
\end{align*}
If $\Gamma_{\alpha}$ (and hence $G_\alpha$) is bounded, then
\begin{align*}
|\Lambda_\alpha (h)|&\le \|G_{\alpha}\|_{\mathcal{B}(\ell^2(\N, \HH)\times \ell^2(\N, \HH))} \left(\|a_1\|_{\ell^2(\N, \HH)}\cdot \|b_1\|_{\ell^2(\N, \HH)} + \|a_2\|_{\ell^2(\N, \HH)}\cdot \|b_2\|_{\ell^2(\N, \HH)}\right)\\ 
&= \|G_{\alpha}\|_{\mathcal{B}(\ell^2(\N, \HH)\times \ell^2(\N, \HH))} \left(\|f_1\|_{H^2(\p\B)}\cdot\|g_1\|_{H^2(\p\B)}+\|f_2\|_{H^2(\p\B)}\cdot\|g_2\|_{H^2(\p\B)}\right).
\end{align*}
Since the decomposition $h=f_1\star g_1+f_2\star g_2$ is arbitrary, taking  the infimum on all possible decompositions of $h$ in $H^2(\p\B)\star H^2(\p\B)+H^2(\p\B)\star H^2(\p\B)$, we get  
\[|\Lambda_\alpha(h)|\le \|G_{\alpha}\|_{\mathcal{B}(\ell^2(\N, \HH)\times \ell^2(\N, \HH))} \|h\|_{H^2(\p\B)\star H^2(\p\B)+H^2(\p\B)\star H^2(\p\B)} \]
and hence that
\begin{equation}\label{normaminore}
\|\Lambda_\alpha\|_{\mathcal B (H^2(\p\B)\star H^2(\p\B)+H^2(\p\B)\star H^2(\p\B))}\le \|\Gamma_{\alpha}\|_{\mathcal{B}(\ell^2(\N, \HH))}.
\end{equation}
Since $H^2(\p\B)\star H^2(\p\B)+H^2(\p\B)\star H^2(\p\B)=H^1(\p\B)$ is a linear subspace of $L^1_s(\p\B)$, by the quaternionic Hahn-Banach Theorem \ref{hahn} we can extend $\Lambda_\alpha$ to a bounded linear operator $\tilde \Lambda$ on $L^1_s(\p\B)$ with same norm.
Recalling Theorem \ref{duals}, we get then that there exists $\psi\in L^{\infty}_s(\p\B)$ such that 
$\tilde \Lambda (f) = \langle f, \psi  \rangle_{L^2_s(\p\B)}$ for any $f\in L^1(\p\B)$ and such that
\[\|\Lambda_\alpha\|_{\mathcal B (H^2(\p\B)\star H^2(\p\B)+H^2(\p\B)\star H^2(\p\B))}= \|\tilde \Lambda \|_{\mathcal B (L^1(\p\B))}=\|\psi\|_{L^{\infty}(\p\B)}.\]

\noindent Moreover, since $\tilde \Lambda_{|_{H^2(\B)\star H^2(\B)+H^2(\B)\star H^2(\B)}}=\Lambda_\alpha$, we get that $\mathbb{P}_+\psi=\varphi$ and hence 
we conclude 
\[\inf\{\|\psi\|_{L^{\infty}(\p\B)} : \psi \in L^{\infty}_s(\p\B), \,  \widehat{\psi}(m)= \overline{\alpha}_m, \,  m\ge 0\}\le \|\Gamma_{\alpha}\|_{\mathcal{B}(\ell^2(\N, \HH))}.\]
%
\end{proof}


Hankel operators admit also a realization as operators from the Hardy space $H^2(\p\B)$ to its orthogonal complement $H^2_- (\p\B)$.
Let $\varphi\in  L^2_{s} (\p\B) $ have power series expansion $\varphi(q)=\sum_{n\in \Z}q^n\widehat{\varphi}(n)$. 
On the dense subset of polynomials in $H^2(\p\B)$, we can define the operator associated with $\varphi$ 
\[H_{\varphi}: H^2(\p\B) \to H^2_- (\p\B)\]
as
\begin{equation}\label{projection}
H_{\varphi}f=\mathbb{P}_- ( \varphi \star f ).
\end{equation} 
If 
we 
do explicit computations we get
\begin{equation}\label{proj2}
H_{\varphi} f= \mathbb P_- \Big(\sum_{n\in \Z}q^n\sum_{k \in \Z}\widehat{\varphi}(n-k)\widehat{f}(k)\Big)=\sum_{n < 0}q^n\sum_{k \ge 0}\widehat{\varphi}(n-k)\widehat{f}(k).
\end{equation}
Hence the matrix associated with $H_\varphi$ with respect to the basis $\{q^n\}_{n\ge 0}$ of $H^2(\p\B)$ and $\{\bar q^{n}\}_{n\ge 0}$ of $H^2_-(\p\B)$,
 is the Hankel matrix $(\widehat{\varphi}(n-k))_{n < 0,k \ge 0}$.

With this in mind, the characterization of bounded Hankel operators, acting on $H^2(\p\B)$, can be given in the following way. 


\begin{teo}\label{nehari2}
Let $\varphi(q)=\sum_{n\in \Z}q^n \widehat{\varphi}(n) \in  L^2_{s} (\p\B) $. The following conditions are equivalent: 
\begin{itemize}
\item[(1)] $H_\varphi$ is bounded on $H^2(\p\B)$;
\item[(2)] there exists $\psi \in  L^{\infty}_{s}(\p \B)$ such that $\widehat{\psi}(m)=\overline{\widehat{\varphi}(m)}$ for any $m<0$;
\item[(3)] $\mathbb P_-\varphi \in BMO(\p\B) $.
\end{itemize}
If one of these conditions holds, then

\begin{equation}\label{normaHfi}
\begin{aligned}
\inf\{\|\psi\|_{L^{\infty}(\p\B)} \, : \,\psi \in L^{\infty}_s(\p\B) ,\, &\widehat{\psi}(m)= \overline{\widehat{\varphi}(m)},\, m < 0\} \le \|H_\varphi\|_{\mathcal B (H^2(\p\B))}\\
&\le 2 \inf\{\|\psi\|_{L^{\infty}(\p\B)}  : \psi \in L^{\infty}_s(\p\B) ,\, \widehat{\psi}(m)= \overline{\widehat{\varphi}(m)}, \, m < 0\}.
\end{aligned}
\end{equation}
\end{teo}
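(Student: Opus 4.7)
My plan is to derive Theorem \ref{nehari2} from the sequence-version Nehari Theorem \ref{nehari1}, combined with the $BMO$ decomposition of Proposition \ref{decom} and the regular-conjugation invariance of $L^\infty_s(\p\B)$ from Proposition \ref{normainfinito}.

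First I would identify $H_\varphi$ with a classical Hankel matrix on $\ell^2(\N,\HH)$. Using the orthonormal bases $\{q^k\}_{k\ge 0}$ of $H^2(\p\B)$ and $\{q^{-m-1}\}_{m\ge 0}$ of $H^2_-(\p\B)$, equation \eqref{proj2} shows that the matrix of $H_\varphi$ is $(\widehat{\varphi}(-m-1-k))_{m,k\ge 0}$, which is the Hankel matrix $M_\alpha$ associated with $\alpha_j:=\widehat{\varphi}(-j-1)$, $j\ge 0$. Since the basis identifications are isometric right $\HH$-linear isomorphisms, $\|H_\varphi\|_{\mathcal B(H^2(\p\B))}=\|\Gamma_\alpha\|_{\mathcal B(\ell^2(\N,\HH))}$, so (1) holds iff $\Gamma_\alpha$ is bounded. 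By Theorem \ref{nehari1}, this is equivalent to the existence of $\chi\in L^\infty_s(\p\B)$ with $\widehat{\chi}(m)=\overline{\widehat{\varphi}(-m-1)}$ for every $m\ge 0$, with norm bounds \eqref{normaGamma}.

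Next I would translate this ``positive-index'' Nehari datum into condition (2) by an explicit involution. Define $\chi^\sharp(q):=\chi(\bar q)$ for $q\in\p\B$; the representation formula \eqref{repfor} shows $\chi^\sharp$ is slice, and direct computation gives $\widehat{\chi^\sharp}(n)=\widehat{\chi}(-n)$. Set $\psi:=q^{-1}\star\chi^\sharp$, which is slice as a $\star$-product of slice functions, with Fourier coefficients $\widehat{\psi}(n)=\widehat{\chi^\sharp}(n+1)=\widehat{\chi}(-n-1)$; hence $\widehat{\psi}(-m-1)=\overline{\widehat{\varphi}(-m-1)}$ for $m\ge 0$. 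Applying Proposition \ref{trasf} with $f(q)=q^{-1}$ (so that $f(q)^{-1}qf(q)=q$) yields $\psi(q)=q^{-1}\chi^\sharp(q)$ pointwise on $\p\B$, whence $\|\psi\|_{L^\infty(\p\B)}=\|\chi\|_{L^\infty(\p\B)}$. The inverse transformation $\chi=(q\star\psi)^\sharp$ is analogous and again $L^\infty$-isometric, so the two Nehari conditions are equivalent and the two infima coincide; combined with Step 1 this yields both (1)$\Leftrightarrow$(2) and the norm estimates \eqref{normaHfi}.

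Finally I would prove (2)$\Leftrightarrow$(3) using Proposition \ref{decom}. If (2) holds, then $\chi:=\psi^c$ belongs to $L^\infty_s(\p\B)$ by Proposition \ref{normainfinito} and satisfies $\widehat{\chi}(n)=\widehat{\varphi}(n)$ for $n<0$, so $\mathbb{P}_-\varphi=\mathbb{P}_-\chi=\chi+\mathbb{P}_+(-\chi)$, exhibiting the decomposition of Proposition \ref{decom} and giving $\mathbb{P}_-\varphi\in BMO(\p\B)$. Conversely, if $\mathbb{P}_-\varphi=\phi_1+\mathbb{P}_+\phi_2$ with $\phi_1,\phi_2\in L^\infty_s(\p\B)$, applying $\mathbb{P}_-$ gives $\mathbb{P}_-\varphi=\mathbb{P}_-\phi_1$, and $\phi_1^c$ satisfies (2). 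The main obstacle is Step 3: the Nehari datum in Theorem \ref{nehari1} prescribes the \emph{non-negative}-index coefficients of the extremal function, while (2) prescribes the \emph{negative}-index coefficients of $\psi$; in the complex setting this reindexing is a trivial multiplication by $\bar z$, but here one must check that the appropriate version, $\psi=q^{-1}\star\chi^\sharp$, both preserves sliceness and admits the pointwise formula of Proposition \ref{trasf} so that the $L^\infty$ norm is transported without loss.
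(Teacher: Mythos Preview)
Your proof is correct and follows essentially the same route as the paper: both reduce to Theorem \ref{nehari1} via the Hankel-matrix identification of $H_\varphi$ with a $\Gamma_\alpha$, use the involution $q\mapsto\bar q$ to pass between the positive-index Nehari datum of Theorem \ref{nehari1} and condition (2), and invoke Propositions \ref{decom} and \ref{normainfinito} for the equivalence (2)$\Leftrightarrow$(3). The only cosmetic difference is that the paper's looser choice $\alpha_n=\widehat\varphi(-n)$ lets it get by with the bare reflection $\psi(\bar q)$, whereas your more careful indexing $\alpha_j=\widehat\varphi(-j-1)$ forces the extra shift $\psi=q^{-1}\star\chi^\sharp$, which you correctly verify via Proposition \ref{trasf} to be $L^\infty$-isometric.
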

\begin{proof}
Consider the sequence $\widehat{\varphi}:=\{\widehat{\varphi}(-n)\}_{n\ge 0} \in \ell^2(\N, \HH)$. 
Equation \eqref{proj2} implies that $H_\varphi$ is bounded on $H^2(\p\B)$ if and only if the operator $\Gamma_{\widehat{\varphi}}$ is bounded on $\ell^2(\N, \HH)$.
The equivalence of statements $(1)$ and $(2)$ is therefore a consequence of Theorem \ref{nehari1}: $\Gamma_{\widehat{\varphi}}$ is bounded if and only if there exists $\psi \in  L^{\infty}_{s}(\p\B)$ such that $\widehat{\psi}(m)=\overline{\widehat{\varphi}(-m)}$ for any $m \ge 0$ which, considering $\psi(\bar q)$, is equivalent to condition $(2)$. 

The equivalence of $(2)$ and $(3)$ can be proven as follows. First, Proposition \ref{decom} states that $(3)$ is equivalent to the existence of a function $\psi \in  L^{\infty}_{s}(\p \B)$ such that $\mathbb P_-\psi=\mathbb P_-\varphi$. Then, since  $\psi \in L^{\infty}_s(\p\B)$ if and only if $\psi^c \in L^{\infty}_s(\p\B)$ (see Proposition \ref{normainfinito}) and the projection operator $\mathbb{P}_-$ commutes with the regular conjugation, we conclude that $\mathbb P_-(\psi ^c) =\mathbb P_-(\varphi^c)$,  i.e. that $\psi^c$ is the function that realizes condition $(2)$.

Inequalities \eqref{normaHfi} are a direct application of inequalities \eqref{normaGamma} to the operator $\Gamma_{\widehat \varphi}$. 
\end{proof}
Equivalence of conditions $(1)$ and $(2)$ in particular says that $\varphi$ in $L^2_s(\p\B)$ is such that $H_\varphi$ is bounded if and only if there exists $\psi\in L^{\infty}_s(\p\B)$ such that $H_\varphi=H_\psi$. 
In fact equation \eqref{proj2} implies that $H_\varphi$ depends only on the negative coefficients of $\varphi$.  
Thus the class of bounded Hankel operators on $H^2(\p\B)$ is covered by operators of the form $H_\varphi$ with a bounded symbol $\varphi \in  L^{\infty}_{s}(\p\B)$. 
We can finally prove the announced result relating the norm of a Hankel operator with the $L^{\infty}$-distance of its symbol from the space of bounded slice regular functions.
\begin{teo}\label{43}
Let $\varphi \in  L^{\infty}_{s}(\p\B)$. Then 
\begin{equation}\label{inf}
\|H_\varphi\|_{\mathcal B (H^2(\p\B))}=\inf\{\|\varphi-f\|_{L^{\infty}(\p\B)} \, : \, f \in H^{\infty}(\p \B)
\}.
\end{equation}
\end{teo}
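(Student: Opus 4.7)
The plan is to prove the theorem by establishing the two opposite inequalities separately, exploiting the already-proved Nehari-type Theorem~\ref{nehari2} for one direction and the algebraic identity $H_\varphi=H_{\varphi-f}$ for the other.

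For the lower bound $\|H_\varphi\|_{\mathcal B(H^2(\p\B))}\ge \inf\{\|\varphi-f\|_{L^\infty(\p\B)} : f\in H^\infty(\p\B)\}$, I would apply the left-hand inequality of Theorem~\ref{nehari2}, namely
$\inf\{\|\psi\|_{L^\infty}:\psi\in L^\infty_s(\p\B),\ \widehat\psi(m)=\overline{\widehat\varphi(m)}\ \text{for}\ m<0\}\le \|H_\varphi\|$,
and re-parameterize the constraint set through the regular conjugate. Indeed, for any admissible $\psi$ the function $f:=\varphi-\psi^c$ satisfies $\widehat f(m)=\widehat\varphi(m)-\overline{\widehat\psi(m)}=0$ for $m<0$, so $f$ is slice regular and, being in $L^\infty_s(\p\B)$, belongs to $H^\infty(\p\B)$; conversely every $f\in H^\infty(\p\B)$ produces $\psi=(\varphi-f)^c$ lying in the constraint set. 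Proposition~\ref{normainfinito} then gives $\|\psi\|_{L^\infty}=\|\psi^c\|_{L^\infty}=\|\varphi-f\|_{L^\infty}$, so the two infima coincide.

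For the upper bound $\|H_\varphi\|_{\mathcal B(H^2(\p\B))}\le \inf\{\|\varphi-f\|_{L^\infty(\p\B)} : f\in H^\infty(\p\B)\}$, the decisive observation is that $H_f\equiv 0$ for every $f\in H^\infty(\p\B)$: for any $g\in H^2(\p\B)$ the $\star$-product $f\star g$ has only non-negative Fourier modes and, by Proposition~\ref{trasf} combined with the Representation Formula, lies in $L^2(\p\B)$; hence $f\star g\in H^2(\p\B)$ and $\mathbb P_-(f\star g)=0$. Therefore $H_\varphi=H_{\varphi-f}$ for every $f\in H^\infty(\p\B)$, and once one has the norm estimate $\|H_\psi\|_{\mathcal B(H^2(\p\B))}\le \|\psi\|_{L^\infty(\p\B)}$ for every $\psi\in L^\infty_s(\p\B)$, applying it to $\psi=\varphi-f$ and taking the infimum over $f\in H^\infty(\p\B)$ gives the claim.

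The main obstacle I expect to face is precisely the sharp bound $\|H_\psi\|_{\mathcal B(H^2(\p\B))}\le \|\psi\|_{L^\infty(\p\B)}$ with constant exactly $1$. A direct estimate based on $\|H_\psi g\|_{L^2}\le \|\psi\star g\|_{L^2}$, Proposition~\ref{trasf}, and the Representation Formula applied to the rotated argument $\psi(q)^{-1}q\psi(q)$ produces only the bound $\|H_\psi\|\le 2\|\psi\|_{L^\infty}$, matching the factor $2$ already seen in Theorem~\ref{nehari2} and reflecting the absence of a norm-one $H^1$-factorization in the quaternionic setting. To eliminate this spurious factor I would pass to the bilinear pairing $\langle H_\psi u,v\rangle_{L^2_s}=\langle \psi\star u,v\rangle_{L^2_s}$ with $u\in H^2(\p\B)$ and $v\in H^2_-(\p\B)$, exploit that only the negative-frequency part of $\psi$ contributes to it (so the pairing is invariant under $\psi\mapsto \psi+h$ for every $h\in H^\infty(\p\B)$), and close the gap through the $L^1_s$–$L^\infty_s$ duality of Theorem~\ref{duals} in the spirit of the Hahn–Banach argument used for the lower bound in Theorem~\ref{nehari1}, together with the compactness-based attainment of the infimum noted after the statement of Theorem~\ref{mainintro}.
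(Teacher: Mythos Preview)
Your lower-bound argument is correct and coincides with the paper's. The gap is entirely in the upper bound: you rightly observe that a direct estimate of $\|\psi\star g\|_{L^2_s}$ via Proposition~\ref{trasf} and the Representation Formula only yields $\|H_\psi\|\le 2\|\psi\|_{L^\infty}$, but your proposed repair---the bilinear pairing, the $L^1_s$--$L^\infty_s$ duality of Theorem~\ref{duals}, Hahn--Banach, and the attainment of the infimum---does not remove the factor $2$. The Hahn--Banach/duality mechanism in Theorem~\ref{nehari1} manufactures a symbol $\psi$ with $\|\psi\|_{L^\infty}\le\|\Gamma_\alpha\|$; that is the \emph{lower}-bound direction, not an operator-norm upper bound. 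The bilinear estimate itself (equation~\eqref{minore} and what follows) is precisely where the factor $2$ originates. And invoking the attainment of the infimum would be circular here, since the identification of that infimum with $\|H_\varphi\|$ is the content of the present theorem.

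The idea you are missing is purely algebraic: use regular conjugation to \emph{reverse the order} of the $\star$-factors. By Proposition~\ref{scambio} one has $(\psi\star g)^c=g^c\star\psi^c$, and since the $L^2_s$-norm is invariant under $(\cdot)^c$, it follows that $\|\psi\star g\|_{L^2_s}=\|g^c\star\psi^c\|_{L^2_s}$. Now the $H^2$-function $g^c$ sits on the \emph{left}, so Proposition~\ref{trasf} gives $g^c\star\psi^c(q)=g^c(q)\,\psi^c\!\bigl(g^c(q)^{-1}q\,g^c(q)\bigr)$ almost everywhere on $\p\B$; the rotated argument is still on $\p\B$, so the second factor is bounded pointwise by $\|\psi^c\|_{L^\infty}$ and no Representation-Formula detour is needed. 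This yields $\|\psi\star g\|_{L^2_s}\le\|g^c\|_{L^2_s}\|\psi^c\|_{L^\infty}=\|g\|_{L^2_s}\|\psi\|_{L^\infty}$ (using Proposition~\ref{normainfinito}) with constant exactly $1$. With this estimate in hand, the rest of your outline goes through unchanged.
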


\begin{proof}
Let $\psi \in  L^{\infty}_{s}(\p\B)$ be such that $\mathbb P_-\psi= \mathbb P_- \varphi$. Then $H_\varphi=H_\psi$ and
\[\|H_\varphi\|_{\mathcal B (H^2(\p\B))}=\|H_\psi\|_{\mathcal B (H^2(\p\B))}=\sup_{f\in H^2(\p\B), f\not\equiv 0}\frac{\|H_\psi(f)\|_{ L^2_{s}(\p\B)}}{\|f\|_{ L^2_{s}(\p\B)}}.\]
Since $H_\psi$ is bounded, we can express it as a projection (extending definition \eqref{projection} to the whole $H^2(\p\B)$), thus obtaining
\begin{align}\label{disuconj}
\|H_\psi (f)\|_{ L^2_{s}(\p\B)}&=\|\mathbb P_-(\psi\star f )\|_{ L^2_{s}(\p\B)}\le \|\psi\star f\|_{ L^2_{s}(\p\B)}=\|(\psi\star f)^c\|_{ L^2_{s}(\p\B)}\\ \nonumber
&=\|f^c\star \psi^c\|_{ L^2_{s}(\p\B)}\le \|f^c\|_{ L^2_{s}(\p\B)}\|\psi^c\|_{L^{\infty}(\p\B)},
\end{align}
where we used Proposition \ref{scambio} and where the last inequality follows from the integral form \eqref{normaint} of the slice $L^2$ norm and from Proposition \ref{trasf}: if $f\in H^2(\p\B)$ and $f\not \equiv 0$, then $f^c \in H^2(\p\B)$ and it is almost everywhere non-vanishing on $\p\B$ so we can write
\[\|f^c\star \psi^c\|_{ L^2_{s}(\p\B)}=\int_{\p\B}|f^c(e^{It})||\psi^c(f^c(e^{It})^{-1}e^{It}f^c(e^{It}))|d\Sigma(e^{It})\le\int_{\p\B}|f^c(e^{It})|d\Sigma(e^{It})\|\psi^c\|_{L^{\infty}(\p\B)}.\] 
Recalling that for any $f\in H^2(\p\B)\subset L^2_s(\p\B)$, $\|f\|_{L^2_s(\p\B)}=\|f^c\|_{L^2_s(\p\B)}$ and that, thanks to Proposition \ref{normainfinito}, for any $\psi \in L^{\infty}_{s}(\p\B)$, $\|\psi\|_{L^{\infty}(\p\B)}=\|\psi^c\|_{L^{\infty}(\p\B)}$,
we get
\begin{equation}\label{18}
\|H_\psi (f)\|_{ L^2_{s}(\p\B)}\le \|f\|_{ L^2_{s}(\p\B)}\|\psi\|_{L^{\infty}(\p\B)},
\end{equation}
and hence that 
\[\|H_\varphi\|_{\mathcal B (H^2(\p\B))}\le \|\psi\|_{L^{\infty}(\p\B)}.\]
Since $\psi$ was an arbitrary element of $ L^{\infty}_{s}(\p\B)$ such that $\mathbb P_-\psi= \mathbb P_- \varphi$,  we obtain that
\[\|H_\varphi\|_{\mathcal B (H^2(\p\B))}\le\inf\{\|\psi\|_{L^{\infty}(\p\B)} \, : \, \psi \in L^{\infty}_s(\p\B), \, \widehat{\psi}(m)= \widehat{\varphi}(m),  m < 0\}\]
and hence, recalling the first inequality in \eqref{normaHfi}, that

\[\|H_\varphi\|_{\mathcal B (H^2(\p\B))}=\inf\{\|\psi\|_{L^{\infty}(\p\B)} \, :  \, \psi \in L^{\infty}_s(\p\B), \, \widehat{\psi}(m)= \widehat{\varphi}(m),  m < 0\}\]

\noindent which is a reformulation of equality \eqref{inf}: on the one hand $\varphi-f \in L^{\infty}_s(\p\B)$ and
$\mathbb P_-(\varphi - f)=\mathbb P_- \varphi$ for any $f\in H^{\infty}(\p\B)$; on the other hand $\varphi-\psi \in H^{\infty}(\p\B)$ for any $\psi \in L^{\infty}_s(\p\B)$ such that $\mathbb{P}_-\psi= \mathbb{P}_-\varphi$.

\end{proof}

To prove that the infimum in equation \eqref{inf} is attained for any $\varphi \in L^{\infty}_{s}(\p\B)$ we need the following version of Montel Theorem for slice regular functions.
\begin{teo}\label{montel}
Let $\{f_n\}$ be a uniformly bounded family of slice regular functions on $\B$. Then $\{f_n\}$ has a subsequence convergent uniformly on compact sets to a slice regular function. 
\end{teo}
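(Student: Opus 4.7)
The plan is to reduce the statement to the classical Montel theorem for holomorphic functions on a complex disk via the splitting lemma, and then to promote the conclusion from a single slice back to the whole ball $\B$ by means of the Representation Formula \eqref{repfor}.

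First I would fix $I \in \s$ and pick $J \in \s$ with $J \perp I$. By the splitting lemma for slice regular functions, each $f_n$ restricted to $\B_I = \B \cap L_I$ decomposes uniquely as $f_n|_{\B_I} = F_n + G_n J$, with $F_n, G_n \colon \B_I \to L_I$ holomorphic. The hypothesis $\sup_n \sup_{q \in \B} |f_n(q)| \le M$ forces $|F_n|^2 + |G_n|^2 \le M^2$ on $\B_I$, so $\{F_n\}$ and $\{G_n\}$ are both uniformly bounded families of holomorphic functions on the complex disk $\B_I \cong \D$. The classical Montel theorem, combined with a diagonal extraction, then produces a subsequence $\{f_{n_k}\}$ together with holomorphic $F, G \colon \B_I \to L_I$ such that $F_{n_k} \to F$ and $G_{n_k} \to G$ uniformly on compact subsets of $\B_I$. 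Setting $f_I := F + GJ$, I would define $f := \ext(f_I)$, the unique slice regular extension of $f_I$ to $\B$.

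The remaining step is to upgrade uniform convergence from compact subsets of the single slice $\B_I$ to compact subsets of the whole ball. For any $q = x+yK \in \B$ with $K \in \s$, applying the Representation Formula \eqref{repfor} to both $f_{n_k}$ and $f$ gives
\[f_{n_k}(q) - f(q) = \frac{1-KI}{2}\bigl(f_{n_k}(x+yI) - f(x+yI)\bigr) + \frac{1+KI}{2}\bigl(f_{n_k}(x-yI) - f(x-yI)\bigr).\]
As $q$ ranges over a compact set $C \subset \B$, the map $q \mapsto (\RRe q, |q - \RRe q|)$ is continuous, so the points $x \pm yI$ remain in a common compact set $C' \subset \B_I$; moreover the multipliers $(1 \pm KI)/2$ have modulus at most $1$. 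Hence the uniform convergence of $f_{n_k}$ to $f$ on $C'$, immediate from the uniform convergence of the splitting components, propagates to uniform convergence on $C$.

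The main (and rather mild) obstacle lies in this last step: one must keep track of the fact that the points arising from the Representation Formula all live in $\B_I$, so that compactness is preserved under the map $q \mapsto (x+yI, x-yI)$. As an alternative more in the spirit of the preceding sections, one could argue directly via power-series coefficients: writing $f_n(q) = \sum_{k \ge 0} q^k a_{n,k}$, Cauchy-type estimates on each slice give $|a_{n,k}| \le M$ for all $n,k$, a diagonal argument yields $a_{n_k, k} \to a_k$ for every $k$, and the limit $f(q) = \sum_{k \ge 0} q^k a_k$ is slice regular on $\B$, with $f_{n_k} \to f$ uniformly on each closed subball by a routine tail estimate.
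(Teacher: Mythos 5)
Your proof is correct and follows essentially the same route as the paper: split $f_n|_{\B_I}$ via the Splitting Lemma, apply the classical Montel theorem to the two holomorphic components, and extend the limit with the Representation Formula. In fact you make explicit the final step (propagating uniform convergence on compacta from the slice $\B_I$ to all of $\B$) that the paper's proof passes over in one sentence.
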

\begin{proof}
Let $I\in \s$ be any imaginary unit and let $J\in \s$, $J$ orthogonal to $I$. For any $n$, consider the restriction $f_{I,n}$ of $f_n$ to $\B_I$. Thanks to the Splitting Lemma (see \cite{libroGSS}), for any $n$, there exist two holomorphic functions $F_{I,n},G_{I,n}$ such that $f_{I,n}=F_{I,n}+G_{I,n}J$. Since for any $n$ $|f_{I,n}|^2=|F_{I,n}|^2+|G_{I,n}|^2$ we get that both $F_{I,n}$ and $G_{I,n}$ are uniformly bounded. 
Thanks to the classical Montel Theorem, up to subsequences both $F_{I,n}$ and $G_{I,n}$ converge uniformly on compact sets to holomorphic functions $F$ and $G$ respectively. Hence, up to subsequences, $f_{I,n}$ converges uniformly on compact sets to $f_I:=F+GJ$. Extending $f_I$ by means of the representation formula \eqref{repfor} we get that,  up to subsequences, $f_n$ converges uniformly on compact sets to $f:=\ext(f_I)$.  The uniform convergence guarantees that $f$ is slice regular.    
\end{proof}

Now we can show that the infimum in equation \eqref{inf} is in fact a minimum.
\begin{pro}
Let $\varphi \in L^{\infty}_{s}(\p\B)$. Then there exists $g\in H^{\infty}(\p\B)$ realizing the distance of $\varphi$ from the space of bounded slice regular functions, that is such that 
\[\|H_\varphi\|_{\mathcal B (H^2(\p\B))}=\|\varphi-g\|_{L^{\infty}(\p\B)}.\]
Such a $g$ is called an {\em $L^{\infty}$-best approximation of $\varphi$ by a slice regular function}.
\end{pro}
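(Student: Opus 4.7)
The plan is to realize the minimizer as the limit of a minimizing sequence, using the slice regular Montel theorem (Theorem \ref{montel}) just established, and then to verify that this limit attains the distance by invoking weak-* lower semicontinuity in $L^\infty_s(\p\B)$. Let $\mu := \|H_\varphi\|_{\mathcal{B}(H^2(\p\B))}$. By Theorem \ref{43}, there exists a sequence $\{f_n\} \subset H^\infty(\p\B)$ with $\|\varphi - f_n\|_{L^\infty(\p\B)} \to \mu$. This sequence is uniformly bounded in $L^\infty(\p\B)$, and via the identification $H^\infty(\p\B) \cong H^\infty(\B)$ together with the maximum principle, the associated slice regular extensions to $\B$ are uniformly bounded there.

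Applying Theorem \ref{montel} to these extensions, I would extract a subsequence, not relabeled, converging uniformly on compact subsets of $\B$ to a slice regular function $g$. The uniform bound survives in the limit, so $g \in H^\infty(\B)$; identifying $g$ with its boundary values, $g \in H^\infty(\p\B)$.

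The crucial step is to show $\|\varphi - g\|_{L^\infty(\p\B)} \le \mu$. For this I would invoke Theorem \ref{duals} to view $L^\infty_s(\p\B)$ as the dual of $L^1_s(\p\B)$, so that Banach--Alaoglu yields a further subsequence converging weak-* to some $h \in L^\infty_s(\p\B)$. To identify $h$ with $g$, I would test against monomials: each $q^k$, $k \in \Z$, lies in $L^\infty_s(\p\B) \subset L^1_s(\p\B)$, so weak-* convergence delivers $\widehat{f_n}(k) \to \widehat{h}(k)$ for every $k$. For $k < 0$ we get $\widehat{h}(k) = 0$ since $f_n \in H^\infty$; for $k \ge 0$, uniform convergence on compacta forces the Taylor coefficients (equivalently, the nonnegative Fourier coefficients of the boundary values) of $f_n$ to converge to those of $g$, giving $\widehat{h}(k) = \widehat{g}(k)$. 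Hence $h = g$ in $L^2_s(\p\B)$, and therefore in $L^\infty_s(\p\B)$. Weak-* lower semicontinuity of the norm on $L^\infty_s(\p\B)$ now yields
\[
\|\varphi - g\|_{L^\infty(\p\B)} = \|\varphi - h\|_{L^\infty(\p\B)} \le \liminf_n \|\varphi - f_n\|_{L^\infty(\p\B)} = \mu.
\]
Since $g \in H^\infty(\p\B)$, Theorem \ref{43} supplies the reverse inequality $\mu \le \|\varphi - g\|_{L^\infty(\p\B)}$, and equality follows.

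The main obstacle is the identification of the interior Montel limit $g$ with the boundary weak-* limit $h$: these are a priori unrelated limits in different topologies on different function spaces, and reconciling them requires both the explicit Fourier description of $L^2_s(\p\B)$ and the duality $(L^1_s(\p\B))^* \cong L^\infty_s(\p\B)$ from Theorem \ref{duals}, together with the standard fact that uniform convergence on compacta preserves Taylor coefficients.
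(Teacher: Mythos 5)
Your proof follows essentially the same route as the paper's: take a minimizing sequence, observe it is uniformly bounded, and apply the slice regular Montel theorem (Theorem \ref{montel}) to extract a locally uniform limit $g$. The paper's own proof stops abruptly at ``Theorem \ref{montel} lead us to conclude,'' whereas you supply the justification it omits --- weak-* compactness in $L^{\infty}_s(\p\B)=(L^1_s(\p\B))^*$ via Theorem \ref{duals}, identification of the weak-* limit with the Montel limit by testing against the monomials $q^k$, and weak-* lower semicontinuity of the dual norm --- so your argument is correct and in fact fills in the step the paper leaves to the reader.
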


\begin{proof}
Let $m=\inf\{\|\varphi-f\|_{L^{\infty}(\p\B)} \, : \, f \in H^{\infty}(\p \B)
\}$. Since the zero function is a competitor, we get that $m\le \|\varphi\|_{L^{\infty}(\p\B)}$. 
If $m=\|\varphi\|_{L^{\infty}(\p\B)}$, then the zero function is the desired best approximation slice regular function. Otherwise, let $\{f_n\}$ be a minimizing sequence, i.e. a sequence of functions in $H^{\infty}(\p\B)$ such that $\lim_{n\to \infty} \|\varphi-f_n\|_{L^{\infty}(\p\B)}=m$. Then, for $n$ sufficiently large, we have
\[ \|f_n\|_{L^{\infty}(\p\B)}-\|\varphi\|_{L^{\infty}(\p\B)} \le\|\varphi-f_n\|_{L^{\infty}(\p\B)}\le\|\varphi\|_{L^{\infty}(\p\B)}\]
that implies
\[\|f_n\|_{L^{\infty}(\p\B)}\le 2\|\varphi\|_{L^{\infty}(\p\B)},\]
i.e. that $\{f_n\}$ is uniformly bounded. Theorem \ref{montel} lead us to conclude.
\end{proof}
 
%
%

A natural question is then investigate the uniqueness of such a function.
As in the complex case (see \cite{peller}) we can prove the following.
\begin{teo}
Let $\varphi \in  L^{\infty}_{s}(\p\B)$ be such that $H_\varphi$ attains its norm on the unit ball of $H^2(\p\B)$, that is such that $\|H_\varphi\|_{\mathcal B (H^2(\p\B))}=\|H_\varphi g\|_{ L^2_{s} (\p\B) }$ for some $g\in H^2(\p\B)$ with $\|g\|_{ L^2_{s} (\p\B) }=1$.
Then there exists a unique $f\in H^{\infty}(\p\B)$ such that 
\[\|\varphi-f\|_{L^{\infty}(\p\B)}={\rm dist}_{L^{\infty}} (\varphi, H^{\infty}(\p\B)) .\]
\end{teo}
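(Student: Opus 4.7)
I would mirror the classical complex argument (cf.\ \cite{peller}), with the main new ingredients being the $\star$-calculus and the orthogonal decomposition $L^2_s(\p\B) = H^2(\p\B) \oplus H^2_-(\p\B)$. Suppose towards uniqueness that $f_1, f_2 \in H^\infty(\p\B)$ both achieve the infimum, so $\|\varphi - f_i\|_{L^\infty(\p\B)} = \|H_\varphi\|_{\mathcal B(H^2(\p\B))}$ for $i = 1, 2$. Formula \eqref{proj2} shows that $H_\psi$ depends only on the negative-index Fourier coefficients of $\psi$, hence $H_{f_i} = 0$ and therefore $H_\varphi = H_{\varphi - f_i}$.

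Next I would test on the norm-attaining unit vector $g$ and re-run the chain of inequalities from the proof of Theorem \ref{43} with symbol $\psi = \varphi - f_i$. Using $\|g\|_{L^2_s(\p\B)} = 1$ and $\|H_\varphi g\|_{L^2_s(\p\B)} = \|H_\varphi\|_{\mathcal B(H^2(\p\B))}$, this gives
\begin{align*}
\|H_\varphi\|_{\mathcal B(H^2(\p\B))} &= \|H_{\varphi - f_i} g\|_{L^2_s(\p\B)} = \|\mathbb{P}_-((\varphi - f_i) \star g)\|_{L^2_s(\p\B)} \\
&\le \|(\varphi - f_i) \star g\|_{L^2_s(\p\B)} \le \|\varphi - f_i\|_{L^\infty(\p\B)} = \|H_\varphi\|_{\mathcal B(H^2(\p\B))}.
\end{align*}
All intermediate inequalities must therefore be equalities. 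In particular the first of them forces $\mathbb{P}_+((\varphi - f_i) \star g) = 0$, i.e.\ $(\varphi - f_i) \star g \in H^2_-(\p\B)$, for both $i = 1, 2$.

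Subtracting these two memberships yields $(f_2 - f_1) \star g \in H^2_-(\p\B)$. On the other hand, because $f_2 - f_1 \in H^\infty(\p\B)$, another application of \eqref{proj2} gives $H_{f_2 - f_1} = 0$, i.e.\ $\mathbb{P}_-((f_2 - f_1) \star g) = 0$, so $(f_2 - f_1) \star g \in H^2(\p\B)$. Combining, $(f_2 - f_1) \star g$ lies in $H^2(\p\B) \cap H^2_-(\p\B) = \{0\}$ and vanishes a.e.\ on $\p\B$. Viewing this product as the radial boundary value of a function in $H^2(\B)$ and using the fact recalled in Section \ref{LP} that a nontrivial $H^p$ function on $\B$ has almost-everywhere nonvanishing radial limits, the slice regular function $(f_2 - f_1) \star g$ is identically zero on $\B$. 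Since slice regular functions on $\B$ with the $\star$-product form an integral domain (being isomorphic to $\HH[[q]]$, with $\HH$ a skew field) and $g \not\equiv 0$, we conclude $f_1 \equiv f_2$. The main obstacle I anticipate is avoiding pointwise $\star$-manipulations mediated by Proposition \ref{trasf}, which are awkward in the noncommutative setting; the orthogonal decomposition $L^2_s(\p\B) = H^2(\p\B) \oplus H^2_-(\p\B)$ bypasses them cleanly and reduces the argument to the absence of zero divisors for the $\star$-product.
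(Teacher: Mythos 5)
Your proof is correct, and it follows the paper's strategy for the first half while replacing the second half with a genuinely different (and somewhat cleaner) uniqueness argument. The paper also tests on the norm-attaining vector $g$, runs the same equality chain
$\|H_\varphi\|=\|\mathbb{P}_-((\varphi-f)\star g)\|\le\|(\varphi-f)\star g\|\le\|\varphi-f\|_{L^\infty(\p\B)}\|g\|_{L^2_s(\p\B)}=\|H_\varphi\|$,
and extracts $(\varphi-f)\star g\in H^2_-(\p\B)$, exactly as you do. It then diverges: keeping a single minimizer, it solves $H_\varphi g=(\varphi-f)\star g$ for $f$ by forming the regular reciprocal $g^{-\star}=(g^s)^{-1}g^c$ (legitimate because $g^s\in H^1(\p\B)$ is a.e.\ nonvanishing on $\p\B$) and writing $f=\varphi-H_\varphi g\star g^{-\star}$, then checks that this expression does not depend on the choice of norming vector $g$. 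You instead take two minimizers, subtract, place $(f_2-f_1)\star g$ in $H^2(\p\B)\cap H^2_-(\p\B)=\{0\}$, and conclude by the absence of zero divisors for the $\star$-product (the lowest-nonzero-coefficient argument over the skew field $\HH$). Both routes are sound; the paper's yields an explicit formula for the best approximant, while yours is more economical, since it never needs the $\star$-reciprocal nor the a.e.\ nonvanishing of radial limits --- indeed, once all Fourier coefficients of $(f_2-f_1)\star g$ vanish, the formal power series argument already finishes, so your appeal to nonvanishing boundary values is redundant. One small polish: the membership $(f_2-f_1)\star g\in H^2(\p\B)$ is most directly seen from the fact that the $\star$-product of two series supported on nonnegative indices is again supported on nonnegative indices (with $f_2-f_1\in H^{\infty}(\p\B)$ guaranteeing the product lies in $L^2_s(\p\B)$), though your route through $H_{f_2-f_1}=0$ is also valid.
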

 \begin{proof}
Suppose without loss of generality that $\|H_\varphi\|_{\mathcal B (H^2(\p\B))}=1$, and let $f\in H^{\infty}(\p\B)$ be such that
\[\|\varphi-f\|_{L^{\infty}(\p\B)}=\|H_\varphi\|_{\mathcal B (H^2(\p\B))}=1.\]
Then, since $g$ realizes the norm of $H_{\varphi}$, and $H_{\varphi}=H_{\varphi-f}$ (since $\mathbb P_-\varphi=\mathbb P_- (\varphi -f)$), proceeding as we have done in the proof of Theorem \ref{43} to obtain  \eqref{disuconj} and \eqref{18}, we get 
\begin{align*}
1&=\|H_\varphi\|_{\mathcal B (H^2(\p\B))}=\|H_{\varphi-f}\|_{\mathcal B (H^2(\p\B))}=\|H_{\varphi - f}g\|_{L_s^2(\p\B)} =\|\mathbb{P}_-((\varphi - f)\star g)\|_{L_s^2(\p\B)}\\
&\le\| (\varphi - f)\star g \|_{L_s^2(\p\B)}
\le \|g\|_{L_s^2(\p\B)}\|\varphi - f\|_{L^{\infty}(\p\B)}=\|g\|_{L_s^2(\p\B)}=1.
\end{align*}
Therefore all inequalities are equalities, and in particular 
\[\|\mathbb{P}_-( (\varphi - f)\star g)\|_{L_s^2(\p\B)}=\| (\varphi - f)\star g\|_{L_s^2(\p\B)}\]
which means that
$ (\varphi - f)\star g \in H^2_-(\p\B)$ and that 
\[H_\varphi g=H_{\varphi- f}g= (\varphi - f)\star g.\]
Recalling that the symmetrization of a function in $H^2(\p\B)$ belongs to $H^1(\p\B)$ and that functions in $H^1(\p\B)$ are nonvanishing almost everywhere at the boundary (see \cite{hardy}), we can consider the regular reciprocal of $g$, $g^{-\star}=(g^s)^{-1}g^c$, and obtain $f$ as
\[f=\varphi - H_{\varphi}g\star g^{-\star}\]
where $ H_{\varphi}g\star g^{-\star}$ does not depend on $g$ (if $\tilde g \in H^{2}(\p\B)$ is such that 
$\|H_\varphi\|_{\mathcal B (H^2(\p\B))}=\|H_\varphi \tilde g\|_{ L^2_{s} (\p\B) }$ with $\|\tilde g\|_{ L^2_{s} (\p\B) }=1$, then, using the same arguments used for the function $g$, we get that $H_{\varphi}\tilde g\star \tilde g^{-\star}=H_{\varphi}g\star g^{-\star}=\varphi-f$).
Therefore we get that $f$ is uniquely determined by $\varphi$.

\end{proof} 


We conclude this paper with a characterization of Hankel operators involving {\em shift} operators, that reflects the one holding in the complex case, see \cite{peller}.

\noindent Let $S:  L^2_{s} (\p\B)  \to  L^2_{s} (\p\B) $ denote the bilateral shift operator, 
\[(S f)(q)=q\star f(q)=qf(q) \text{ for any }  f\in  L^2_{s} (\p\B), \]
whose adjoint is $(S^*f)(q)= \bar q \star f(q)$, 
and let $T: H^2(\p\B) \to H^2(\p\B)$ denote the right shift operator, 
\[(T f)(q)=q\star f(q)=qf(q)  \text{ for any } f\in H^2(\p\B)\]
whose adjoint is the left (or backward) shift operator introduced in \cite{milanesikrein}, $(T^*f )(q)=q^{-1}(f(q)-f(0))$.
Notice that the operators $\mathbb{P_-}$ and $S$ do commute.
\begin{teo}
Let $R:H^2(\p\B)\to H^2_-(\p\B)$ be a bounded operator. Then $R$ is a Hankel operator if and only if 
\begin{equation}\label{comm}
\mathbb P_-S R=RT.
\end{equation}  
\end{teo}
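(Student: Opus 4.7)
The plan is to translate the operator identity \eqref{comm} into a condition on the matrix of $R$. Using the basis $\{q^{k}\}_{k\ge 0}$ of $H^2(\p\B)$ and the orthogonal family $\{q^{l}\}_{l<0}$ of $H^2_-(\p\B)$, write
\[
Rq^{k}=\sum_{l<0}q^{l}a_{l,k},\qquad a_{l,k}\in\HH,
\]
and compute both sides of \eqref{comm} on every $q^{k}$.

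For the ``only if'' part, assume $R=H_\varphi$ for some $\varphi\in L^2_s(\p\B)$. By \eqref{proj2} one has $a_{l,k}=\widehat{\varphi}(l-k)$. A direct computation, using that $S$ is multiplication by $q$ and that $\mathbb P_-$ discards the constant term produced when $l=-1$, gives
\[
\mathbb P_-SR\,q^{k}=\mathbb P_-\Big(\sum_{l<0}q^{l+1}\widehat{\varphi}(l-k)\Big)=\sum_{m<0}q^{m}\widehat{\varphi}(m-k-1),
\]
while $RT\,q^{k}=Rq^{k+1}=\sum_{m<0}q^{m}\widehat{\varphi}(m-k-1)$. The two sides agree on every basis element, and by boundedness of $R$ the identity extends to all of $H^2(\p\B)$.

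For the ``if'' part, the same computation shows that the hypothesis $\mathbb P_-SR=RT$ forces the relation $a_{l-1,k}=a_{l,k+1}$ for every $l<0$ and $k\ge 0$; iterating gives $a_{l,k}=a_{l-k,0}$, so the entry depends only on the negative integer $l-k$. Define $\widehat{\varphi}(n):=a_{n,0}$ for $n<0$ and $\widehat{\varphi}(n):=0$ for $n\ge 0$. Since $\sum_{n<0}|a_{n,0}|^{2}=\|R(1)\|_{L^2_s(\p\B)}^{2}<+\infty$, the sequence $\{\widehat{\varphi}(n)\}$ lies in $\ell^2(\Z,\HH)$, so $\varphi(q)=\sum_{n\in\Z}q^{n}\widehat{\varphi}(n)\in H^2_-(\p\B)\subset L^2_s(\p\B)$. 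By construction $H_\varphi q^{k}=\sum_{l<0}q^{l}\widehat{\varphi}(l-k)=\sum_{l<0}q^{l}a_{l,k}=Rq^{k}$ for every $k\ge 0$, and boundedness of $R$ promotes this to $R=H_\varphi$ on all of $H^2(\p\B)$.

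The only delicate point of the argument is bookkeeping: one must keep track of the single index shift caused by $\mathbb P_-$ deleting the constant term that $S$ produces out of $H^2_-(\p\B)$. This shift is exactly what turns the relation $a_{l-1,k}=a_{l,k+1}$ into the familiar antidiagonal-constant (Hankel) structure once $H^2_-(\p\B)$ is reindexed via $\{\bar q^{n}\}_{n\ge 1}$.
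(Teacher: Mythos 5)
Your proof is correct and follows essentially the same route as the paper: both directions reduce to a computation on the basis elements $q^k$, showing that \eqref{comm} is equivalent to the matrix entries of $R$ being constant along antidiagonals. The only difference is that in the ``if'' direction you go one step further and explicitly assemble the symbol $\varphi\in H^2_-(\p\B)$ from the column $R(1)$, whereas the paper stops once the Hankel structure of the matrix is established.
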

\begin{proof}
Suppose first that $R$ is a Hankel operator,  $R=H_{\varphi}$ with $\varphi\in  L^{\infty}_{s}(\p\B)$. Then, for any $f\in H^2(\p\B)$,
\begin{align*}
\mathbb P_-S R f(q) &=\mathbb{P}_-S H_\varphi f(q)=\mathbb{P}_- S  \mathbb P_-\varphi\star f(q) = \mathbb{P}_- q \star \varphi\star f (q)\\
&=\mathbb{P}_- \varphi\star q \star f (q)=H_{\varphi} S  f(q)=H_{\varphi} T  f(q)=RTf(q).
\end{align*}

Let now $R$ be an operator satisfying equation \eqref{comm}. To complete the proof we need to show that the matrix associated with $R$ with respect to the bases $\{q^n\}_{n\ge 0}$ of $H^2(\p\B)$ and $\{\bar q^n\}_{n> 0}$ of $H_-^2(\p\B)$ is a Hankel matrix.
For any $j\ge 1, k\ge 1$, 
\begin{align*}
&\left \langle Rq^j, \bar q^k\right \rangle_{ L^2_{s}(\p\B)}=\left \langle R T q^{j-1}, \bar q^k\right \rangle_{ L^2_{s}(\p\B)}=\left \langle \mathbb P_- S R q^{j-1}, \bar q^k\right \rangle_{ L^2_{s}(\p\B)}\\
&=\left \langle  S  Rq^{j-1}, \bar q^k\right \rangle_{ L^2_{s}(\p\B)}=\left \langle  R q^{j-1},  S^*\bar q^k\right \rangle_{ L^2_{s}(\p\B)}=\left \langle  R q^{j-1}, \bar q^{k+1} \right \rangle_{ L^2_{s}(\p\B)}
\end{align*}
which is the condition for $R$ to be a Hankel operator.
\end{proof}

\subsection*{Acknowledgments}
{The author wish to thank Nicola Arcozzi for useful discussions about the topic.}

\end{document}